\newtheorem{theorem}{Theorem}
\title{Patterns in soil organic carbon dynamics: integrating
microbial activity, chemotaxis and data-driven
approaches}
\author{
 Angela Monti \\
  Istituto per le Applicazioni del Calcolo M. Picone \\
  National Research Council (CNR)\\
  via G. Amendola 122/D, Bari, Italy\\
  \texttt{angela.monti@cnr.it} 
   \And
 Fasma Diele \\
  Istituto per le Applicazioni del Calcolo M. Picone\\
  National Research Council (CNR)\\
  via G. Amendola 122/D, Bari, Italy\\
  \texttt{fasma.diele@cnr.it}
  \And
Deborah Lacitignola \\
  Dipartimento di Ingegneria Elettrica e dell'Informazione\\
  Universita di Cassino e del Lazio Meridionale\\
  via Di Biasio, Cassino, Italy\\
  \texttt{d.lacitignola@unicas.it}
  \And
 Carmela Marangi \\
  Istituto per le Applicazioni del Calcolo M. Picone\\
  National Research Council (CNR)\\
  via G. Amendola 122/D, Bari, Italy\\
  \texttt{carmela.marangi@cnr.it} \\
}
\begin{document}
\maketitle
\begin{abstract}
 Models of soil organic carbon (SOC) frequently overlook the effects of spatial dimensions and microbiological activities. In this paper, we focus on two reaction-diffusion chemotaxis models for SOC dynamics, both supporting chemotaxis-driven instability and exhibiting a variety of spatial patterns as stripes, spots and hexagons when the microbial chemotactic sensitivity is above a critical threshold. We use symplectic techniques to numerically approximate chemotaxis-driven spatial patterns and  explore the effectiveness of the piecewice dynamic mode decomposition (pDMD) to reconstruct them. Our findings show that pDMD is effective at precisely recreating chemotaxis-driven spatial patterns, therefore broadening the range of application of the method to classes of solutions different than Turing patterns. By validating its efficacy across a wider range of models, this research lays the groundwork for applying pDMD to experimental spatiotemporal data, advancing predictions crucial for soil microbial ecology and agricultural sustainability.
\end{abstract}


\section{Introduction}

\noindent 
The presence of soil organic carbon (SOC) is essential to the idea of sustainable soil health management and is necessary for the operation of agro-ecosystems, \cite{Ramesh2019}. High SOC agricultural soils are more capable of retaining water, have greater nutrient availability, and are more resilient to erosion. Furthermore, sequestering SOC can be a major factor in lowering greenhouse gas emissions and lessening the effects of climate change \cite{Chalchissa2024} and modeling  the dynamics of soil organic carbon  may offer important keys to advance comprehension and problem-solving techniques for issues like food security and climate change,  \cite{hammoudi2015mathematical}.

\noindent
The soil organic carbon content is given by the equilibrium of inputs and outputs into the soil system. The primary sources of inputs are the buildup of organic matter (OM) from plants, which can occur as rhizodeposition or litter (plant leftovers). Plants' photosynthetic carbon fixation is the source of this OM. Outputs are related to heterotrophic respiration processes, namely when soil organisms use soil organic carbon as an energy source and release $CO_2$ back into the atmosphere. Throughout the last forty years, soil scientists have tried to identify the involved critical processes and feedbacks in order to model as  faithfully as possible the breakdown of soil organic carbon and the fluxes of $CO_2$. The majority of models consider soil organic matter as divided into a small number of pools (compartments), often from two to five, each with distinct chemical properties or levels of degradability and with decomposition rates mostly determined by the environmental factors, i.e. soil temperature, aeration, and moisture content.  However, although energetic or metabolic processes connected to microbial activities are well recognized to be the source of the majority of organic transformations in soil, most conventional models did not consider the impact of microbial-mediated mechanisms that modify and stabilize soil carbon inputs, \cite{hammoudi2015mathematical}. This issue was addressed by \cite{ Pansu2010} who developed MOMOS (Modelling Organic changes by Micro-Organisms of Soil), a new nonlinear model that explicitly accounted for the microbial biomass activity and was divided into five compartments: microbial biomass (MB), labile and stable fractions of necromass materials and finally labile and stable fractions of humus. 
%
\noindent
Pansu's model -  initially built and validated from a comparative study using isotopic tracers in two sites, \cite{ Pansu2004}  - showed that, in contrast to previous model predictions, microbial respiration was an essential factor in driving microbial dynamics more in accordance with field data.  

\noindent
Although most SOC dynamic models are described by ordinary differential equations, explicitly including space is necessary to progress the quantitative description of microbial carbon usage, since microbial activity is influenced by spatial factors, \cite{Pagel2020}.  Microbial chemotaxis, i.e. the ability of certain bacteria to direct their movement according to the gradient of chemicals contained in their environment, is very common among soil bacteria and is essential for mediating interactions between plants and microbes, \cite{Weigh2023}. Exploring the mechanistic basis of chemotaxis, has been the focus of an enormous amount of theoretical and experimental research. The first mathematical idea to model chemotaxis was developed in 1953 by \cite{Patlak1953}, whereas in the 1970s, \cite{keller1970initiation} were the first to  explore - through reaction–diffusion equations - the dynamics of chemotaxis phenomena, viewing the initiation of slime mold aggregation as instability.  These pioneering works initiated a fruitful period of mathematical analysis for both the Keller–Segel model and chemotaxis, leading to many analytic results on the subject such as aggregations, dynamics of blow-ups, travelling waves and the interesting property of the above system to give rise to spatial pattern formation, \cite{Hillen2008,Horstman2003}.
Furthermore, in addition to the broad and varied research in the field of pattern formation in PDEs, e.g \cite{Murray_book2, Maini_1997, Zhang_2020, Capone_2021, Lacitignola_2022, Grifo_2023, Mehdaoui_2024, Lacitignola_2021}, reaction-diffusion models with chemotaxis turned out to be a precious tool to explain effects that are not observed in reaction diffusion systems with only self-diffusion or cross-diffusion, \cite{Chaplain_1995,Giunta_2021,Bellomo_2022,Bisi_2023}.\\
Among the different models that generalize the scheme proposed by Keller-Segel, we will focus on two reaction-diffusion chemotaxis systems: the spatial MOMOS model  \cite{hammoudi2018mathematical} and the Mimura-Tsujikawa model \cite{MIMURA1996,kuto2012}, both supporting chemotaxis-driven pattern formation.  This feature turns out to be particularly important since it may provide possible explanations for the formation of soil aggregations \cite{Sarker_2022}, for the bacterial and microorganisms spatial organizations (hotspots in soil) or justify the formation of the microscopic patterns observed in \cite{Vogel2014}.  

\noindent
The numerical approximation of spatial patterns, that are solutions of reaction diffusion chemotaxis models, could be very demanding because it might require a large and finely discretized spatial domain and also a long integration time to obtain accurate solutions. In order to reduce the computational cost of a standard numerical method for these class of equations, the application of data-driven techniques is needed. These methods are directly applied to discrete temporal datasets (experimental or numerical data), in principle without the knowledge of a given mathematical model behind. This approach differs from that used by physics-based Model Order Reduction (MOR) methods, that need the model's governing equations. 

\noindent
In this paper we focus on the Dynamic Mode Decomposition (DMD), firstly introduced by \cite{Schmid_2010} and later modified by \cite{Tu_2014}. The DMD technique is a model reduction approach that aims at reconstructing the best linear dynamical system hidden in a given temporal dataset, by fitting a linear model on the data. In \cite{AMS2024}, the authors show that DMD cannot be able to reconstruct accurately datasets corresponding to solutions with different spatio-temporal dynamics, even if large ranks are used in the DMD algorithm. They hence propose a \emph{piecewice} version of the DMD algorithm (pDMD), obtained by splitting the time interval into several subintervals. They show its effectiveness, in terms of accuracy and computational cost, in reconstructing datasets arising from time snapshots of certain reaction-diffusion PDE systems, whose solutions exhibit peculiar dynamics, such as Turing patterns. 

\noindent
We want here to apply the pDMD approach to reaction-diffusion models with chemotaxis to show that this method is able to reconstruct data from a broader class of models than that considered in \cite{AMS2024}.
We have hence chosen to test the validity of the pDMD method using two models that exhibit chemotaxis-driven pattern formation. This first step can be considered as a further validation that preludes the application of pDMD directly to experimental spatio-temporal data of soil microorganisms with the aim not only to reconstruct experimental microbial patterns but also to make possible predictions.

\noindent
The paper is organized as follows: in Sections \ref{RD_MOMOS} and \ref{RD_Mimura} the spatial MOMOS model 
and the Mimura-Tsujikawa model are introduced. For both these reaction-diffusion chemotaxis systems, conditions for chemotaxis-driven instability and spatial pattern initiation are derived as functions of the models parameters and a critical threshold for the chemotaxis sensitivity established. In Section \ref{numer}, symplectic techniques are used to numerically approximate a variety of chemotaxis-driven spatial patterns for both the models and the piecewice dynamic mode decomposition (pDMD) is applied to reconstruct them. In Section \ref{conclusions}, discussion and concluding remarks close the paper.   


\section{The spatial MOMOS model} \label{RD_MOMOS}

\noindent
In  \cite{hammoudi2018mathematical}, a spatial version of the MOMOS model is proposed, explicitly accounting for chemotaxis. The authors prove that spatial patterns cannot emerge in the model without the chemotaxis term even if no analytical conditions - expressed in terms of the system parameters - is provided for the emergence of chemotaxis-driven instability.  

\noindent
In this section, we focus on the dependence of pattern formation on the chemotaxis term, that is here considered as a bifurcation parameter and explicitly derive a critical threshold - expressed as function of the system parameters - for the onset of spatial patterns for the reaction-diffusion chemotaxis MOMOS model.

\noindent
To this aim, we focus on the following model, \cite{hammoudi2018mathematical}, 

\begin{equation}\label{eq:model}
\begin{cases}
\partial_t u - D_u \Delta u &= -\beta \, div( h(u) \nabla v) - k_1 \, u^p - q \, u^2 + k_2 \,   v\,\quad (t, x) \in \Omega_T, \\
\partial_t v - D_v \Delta v &= k_1 \, u^p \, -k_2 \, v  + \, c, \quad (t, x) \in \Omega_T, \\
\nabla u \cdot \mathbf{n} &= \nabla v \cdot \mathbf{n} = 0, \quad (t, x) \in \Sigma_T, \\
u(0) &= u_0 \in \Omega, \\
v(0) &= v_0\in \Omega.
\end{cases}
\end{equation}

\noindent
that represents a variation of the Keller-Segel model with the reaction part modified to fit the MOMOS model with density-dependent microbial turnover, by means of the exponent $p\in [1,2]$, \cite{diele2023soc}. Differently from the general MOMOS model, \cite{Pansu2010}, where the interactions between organic soil and microbial biomass are described through five different compartments, in model (\ref{eq:model}) a simplified version with only two compartments is considered: the microbial biomass MB and the soil organic matter OM, represented by the state variables $u$ and $v$ respectively.  Here the parameter $\beta$ is the chemotaxis sensitivity whereas $D_u$ and $D_v$ are the diffusion parameters of the microbial mass and of the soil organic matter, respectively. 
The continuous function $h(\cdot)$ is involved in the modeling of chemotaxis. As in  (\cite{hammoudi2018mathematical}), we can consider $h(u) = u(M - u)$ if $0 \leq u \leq M$, and zero otherwise in order to prevent overcrowding of microorganisms or $h(u) = u$. 
The parameters involved in the reaction terms have the following meaning: $k_1$ is the microbial mortality rate, $k_2$ is the soil carbon degradation rate; $q$ is the metabolic quotient; $c$ is the soil carbon input. All the parameters in  model (\ref{eq:model})  are assumed to be positive constants. $\Omega$ is a smooth and bounded domain in $R^n$ representing the soil with $\Omega_T=(0,T)\times \Omega$ and $\Sigma_T=(0,T) \times \partial \Omega$. Zero-flux boundary conditions and a proper set of initial conditions are considered.  In the following, we will focus on the case $p=1$ and $h(u)=u$. 

\subsection{Chemotaxis-driven pattern formation in the MOMOS model} \label{cond_instab}

\noindent 
We first observe that model (\ref{eq:model}) admits the two following constant solutions as spatially homogeneous equilibria: $\left(u_i, \displaystyle \frac{u_i(q\,u_i+k_1)}{k_2}\right)$, for $i=1,2$, where $u_1=\sqrt{\frac{c}{q}}$ and  $u_2=-\sqrt{\frac{c}{q}}$.
Because of the biological framework here involved, in the following  will focus on the unique positive and hence feasible spatially homogeneous equilibrium:  

\begin{equation*}
  P_e=(u_1,v_1)=\left( \sqrt{\dfrac{c}{q}}, \dfrac{k_1}{k_2} \sqrt{\frac{c}{q}} +\dfrac{c}{k_2} \right)
 \end{equation*}

 \noindent
 In \cite{hammoudi2018mathematical}, it was shown that no Turing instability and hence no spatial pattern formation can emerge in model (\ref{eq:model}) in the absence of chemotaxis, i.e. for $\beta=0$, stressing that chemotaxis need to be taken into account for the possible emergence of spatial patterns.  In fact, in reaction diffusion systems, directed movements could have the effects of destabilizing a spatially homogeneous solution when chemotaxis is sufficiently large, i.e. when $\beta$ is above a certain threshold, hence causing the onset of spatially inhomogeneous solutions. More precisely, the following result holds: 

 \begin{theorem}
\label{teo1}
Let 

$$
\beta^*= \displaystyle \frac{\sqrt{q}}{k_1\sqrt{c}}\left(D_u\,k_2+D_v\,k_1+2\,D_v\sqrt{cq}+\sqrt{8\,D_u\,D_v\,k_2\,\sqrt{cq}} \right)
$$

\noindent 
If $\beta>\beta^*$, then the spatially homogeneous equilibrium $P_e=(u_1,v_1)$ of model (\ref{eq:model}) undergoes to chemotaxis-driven instability.
\end{theorem}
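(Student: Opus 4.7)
The plan is to run a standard chemotaxis-driven (Turing-type) instability analysis at $P_e$, treating the chemotactic sensitivity $\beta$ as the bifurcation parameter, and to identify $\beta^*$ as the threshold at which the dispersion relation first admits an unstable wavenumber.

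First I would verify that, absent spatial terms, $P_e$ is a stable equilibrium of the kinetics. Writing $f(u,v)=-k_1 u-qu^2+k_2 v$ and $g(u,v)=k_1 u-k_2 v+c$, a direct computation at $P_e$ gives the entries $f_u=-k_1-2\sqrt{cq}$, $f_v=k_2$, $g_u=k_1$, $g_v=-k_2$, whence the reaction Jacobian $J$ satisfies $\operatorname{tr}J=-k_1-2\sqrt{cq}-k_2<0$ and $\det J=2k_2\sqrt{cq}>0$. So the homogeneous equilibrium is stable for the ODE.

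Next I would linearize the PDE system around $P_e$. Since $\nabla v_1=0$, the chemotactic divergence $-\beta\,\mathrm{div}(u\nabla v)$ contributes only the term $-\beta u_1\Delta\tilde v$ at first order, with $u_1=\sqrt{c/q}$. Expanding the perturbation in Neumann eigenfunctions of $-\Delta$ with eigenvalue $k^2\ge 0$, the linearization reduces to the family of $2\times 2$ matrices
\begin{equation*}
M(k^2)=\begin{pmatrix} -D_u k^2+f_u & -\beta u_1 k^2+f_v \\ g_u & -D_v k^2+g_v \end{pmatrix}.
\end{equation*}
The trace $-(D_u+D_v)k^2+f_u+g_v$ remains negative for all $k^2\ge 0$, so instability must come from the determinant $h(k^2):=\det M(k^2)$ becoming negative at some positive wavenumber. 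A short expansion yields a quadratic in $k^2$,
\begin{equation*}
h(k^2)=D_u D_v\,k^4-\bigl(D_u g_v+D_v f_u+\beta u_1 g_u\bigr)k^2+\det J,
\end{equation*}
with $\det J>0$ and positive leading coefficient.

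The remaining step is the classical condition on this quadratic: $h$ admits a negative value on $k^2>0$ iff the coefficient of $k^2$ is negative and the discriminant is positive. Substituting $g_u=k_1$, $u_1=\sqrt{c/q}$, $f_u=-k_1-2\sqrt{cq}$, $g_v=-k_2$ gives
\begin{equation*}
D_u g_v+D_v f_u+\beta u_1 g_u=-\bigl(D_u k_2+D_v k_1+2D_v\sqrt{cq}\bigr)+\beta\,\tfrac{k_1\sqrt c}{\sqrt q},
\end{equation*}
and the discriminant condition reads
\begin{equation*}
\left(\beta\,\tfrac{k_1\sqrt c}{\sqrt q}-\bigl(D_u k_2+D_v k_1+2D_v\sqrt{cq}\bigr)\right)^{2}>8\,D_u D_v\,k_2\sqrt{cq}.
\end{equation*}
Taking the positive root (the sign consistent with the first condition) and solving for $\beta$ yields exactly the stated threshold $\beta^*$. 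Hence for $\beta>\beta^*$ there is a band of wavenumbers $k^2\in(k_-^2,k_+^2)$ on which $\det M(k^2)<0$, so $M(k^2)$ has a positive real eigenvalue and the homogeneous equilibrium $P_e$ is destabilized, i.e.\ undergoes chemotaxis-driven instability.

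The only delicate point is matching the exact algebraic form of $\beta^*$: one must verify that among the two roots of the threshold equation (the discriminant being zero) the relevant one is the larger, so that the sign of the linear coefficient of $h(k^2)$ is simultaneously negative, and that $u_1$, $g_u$ and the coefficients of $J$ recombine into the compact expression given in the statement. Everything else is routine dispersion-relation bookkeeping.
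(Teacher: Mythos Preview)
Your argument is essentially the paper's own proof: linearize at $P_e$, observe the trace of the mode-$k$ matrix stays negative, write the determinant as a quadratic in the eigenvalue, and impose $B_0<0$ together with $B_0^2-4A_0C_0>0$ to extract the threshold $\beta^*$, noting that the discriminant condition dominates the sign condition. The one slip is the sign of the chemotactic entry in $M(k^2)$: since $-\beta u_1\Delta\tilde v\to +\beta u_1 k^2\tilde v$, the $(1,2)$ entry should be $+\beta u_1 k^2+f_v$; your formula for $h(k^2)$ is nevertheless the correct one for that sign, so the conclusion stands.
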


\begin{proof}
We first observe that the spatially homogeneous solution $P_e=(u_1,v_1)$ is always linearly stable in the absence of diffusion ($D_u=D_v=0$) and chemotaxis ($\beta=0$). In fact, the Jacobian matrix of the reaction terms, evaluated at $P_e$, is given by: 

\begin{equation*}
 \label{J_Momos}
     J^*=\left[ 
     \begin{array}{ccc}
        -\,k_1 -2\sqrt{c\,q} 
        && k_2 \\
        k_1  && -k_2
     \end{array}\right],
 \end{equation*}

\noindent 
so that $detJ^*=2\,k_2\,\sqrt{cq} >0$ and $trJ^*=-2\,\sqrt{cq}-k_1-k_2 <0$. \\
We also recall that, when diffusion is present ($D_u, D_v \neq 0$) but chemotaxis is not considered ($\beta=0$), the spatially homogeneous equilibrium $P_e$ is still linearly stable,  \cite{hammoudi2018mathematical}, so that no diffusion-driven instability can be observed. 
We now consider model (\ref{eq:model}) with both diffusion and chemotaxis. According to the standard linear stability analysis (\cite{Murray_book2}), the stability of $P_e$ is determined by the eigenvalues of the following matrix: 

\begin{equation*}
 \label{Hk_Momos}
     H_k=\left[ 
     \begin{array}{ccc}
        -D_u\,\lambda_k-2\sqrt{c\,q}-k_1 && \beta\,\sqrt{\frac{c}{q}}\,\lambda_k+k_2 \\
        k_1  && -D_v\,\lambda_k-k_2
     \end{array}\right],
 \end{equation*}

\noindent
where $\lambda_k$ is the $k-th$ eigenvalue of the operator $-\Delta$ over $\Omega$ under the no-flux boundary conditions. The characteristic polynomial of (\ref{Hk_Momos}) is given by: 

\begin{equation}
\label{charpoly}
\mu^2+Q_1(\lambda_k)\,\mu + Q_0(\lambda_k)=0,
\end{equation}

\noindent
where

\begin{equation*}
\begin{array}{ll}
Q_1(\lambda_k)=(D_u+D_v)\,\lambda_k+2\,\sqrt{c\,q}+k_1+k_2 >0,\\ 
Q_0(\lambda_k)=A_0\,\lambda_k^2+B_0\,\lambda_k+C_0, 
\end{array}.
\end{equation*}

\noindent
and with 

\begin{equation*}
\begin{array}{ll}
A_0=D_u\,D_v >0;\\
B_0=D_u\,k_2+D_v\,\left(2\,\sqrt{c\,q}+k_1\right)-\beta\,\sqrt{\frac{c}{q}}\,k_1;\\
C_0=2\,k_2\sqrt{c\,q}>0.
\end{array}.
\end{equation*}

\noindent 
Looking at (\ref{charpoly}) we recall that, in order $P_e$ to be unstable, we need $Re(\mu(\lambda_k))>0$ for some $k \in \mathbb N_{+}$. By the Descartes rule of signs, we hence require that $Q_0(\lambda_k)<0$ for some $k \in \mathbb N_{+}$. Since $Q_0(\lambda_k)$ is an upward parabola, this can be obtained by requiring that $B_0<0$ and $\delta^*=B_0^2-4\,A_0\,C_0>0$.\\ 
Observing that, 

\begin{equation}
\label{eq:chemo_condition_momos}
\Bigg \{
\begin{array}{ll}
B_0 < 0 \iff \beta > \tilde{\beta}=\displaystyle \frac{\sqrt{q}}{k_1\,\sqrt{c}}\left(D_u\,k_2+2\,D_v\,\sqrt{c\,q}+D_v\,k_1\right),  \\
\delta^* > 0 \iff \beta > \beta^*=\displaystyle \frac{\sqrt{q}}{k_1\sqrt{c}}\left(D_u\,k_2+D_v\,k_1+2\,D_v\sqrt{cq}+\sqrt{8\,D_u\,D_v\,k_2\,\sqrt{cq}} \right), \\
\end{array}
\end{equation}

\bigskip

\noindent
and holding $\beta^* > \tilde{\beta}$, the thesis easily follows so that $P_e$ changes its stability as the chomotaxis parameter $\beta$ crosses the critical threshold $\beta^*$.
\end{proof}

\noindent
The bifurcation diagram in Figures \ref{fig:momos} and \ref{fig:momos_spots} (Top left panel) shows the region in the $(\beta, q)$ parameter space where conditions (\ref{eq:chemo_condition_momos}) for chemotaxis-driven instability and spatial pattern initiation are verified.   



\section{The Mimura-Tsujikawa model} \label{RD_Mimura}

\noindent
The Mimura-Tsujikawa model was proposed by \cite{MIMURA1996} and analyzed in \cite{kuto2012} to investigate  the pattern dynamics of aggregating regions of biological individuals displaying the chemotaxis proper\-ties. This is given by: 
\begin{equation}
\label{eq:ks}
\begin{cases}
u_t  &= \nabla\cdot (D_u\nabla u-\beta \,u\,\nabla v) + q \,u(1-u),\quad (t, x) \in \Omega_T, \\
v_t &= D_v \Delta v + k_1\, u - k_2 \,v,\quad (t, x) \in \Omega_T, \\
\nabla u\cdot \mathbf{n} &=\nabla v\cdot \mathbf{n} =0\quad x\in\partial\Omega,\\
u(x,0) &= u_0(x), \quad x \in \Omega, \\
v(x,0) &= v_0(x), \quad x \in \Omega
\end{cases}
\end{equation}

\noindent
The state variable $u$ and $v$ here represent a population of biological indivi\-duals and the concentration of a chemical substance, that is produced by the individual itself. The parameter $\beta$ is the chemotaxis sensitivity, $D_u$ and $D_v$ are the diffusion coefficients. The reaction term $f(u) = q\, u(1-u)$ in the first equation is a Fisher type logistic growth with a positive parameter $q$ whereas the reaction term $g(u,v) = k_1 \, v - k_2 \, u$ in the second equation denotes the degradation and the production of the chemical substance, respectively.
We observe that, if there is no growth (i.e. $q = 0$), then the model reduces to the classical Keller–Segel system. 
\noindent
Model (\ref{eq:ks}) admits two spatially homogeneous equilibria, $P_0=(0,0)$ and $P^*=\left(1,\frac{c}{b} \right)$. The Jacobian matrix of the reaction terms is given by
\begin{equation*}
\label{J_ks}
    J=\left[ 
    \begin{array}{ccc}
       q - 2\,q\,u 
       && 0 \\
       k_1 && -k_2
    \end{array}\right]
\end{equation*}
and it is easy to see that, in the absence of any spatial variation,  $P_0=(0,0)$ is linearly unstable  whereas $P^*=(u^*,v^*)$ is always linearly stable. 
\noindent 
However, the spatially homogeneous equilibrium $P^*$ may become linearly unstable when diffusion and chemotaxis are considered hence driving the system towards spatial pattern formation. More precising, with the same arguments as in Subsection \ref{cond_instab},  it can be easily shown that $P^*$ undergoes to chemotaxis-driven instability when the following conditions are verified:
\begin{equation}
\label{eq:chemo_cond_ks}
\begin{cases}
\beta k_1 - D_u \,k_2 - D_v\, q > 0 \\
    \left(\beta k_1 - D_u\, k_2 - D_v\, q \right)^2 - 4 D_u D_v\, q\, k_2 >0,
    \end{cases}
\end{equation}

\noindent 
namely for

    \begin{equation}
        \label{eq:ks_beta_c}
        \beta > \beta_c :=  \frac{D_u \, k_2 + D_v \, q + 2 \sqrt{D_u\, D_v \, q \, k_2 }}{k_1}
    \end{equation}

\noindent
 that is when the chemotaxis parameter $\beta$ is above the critical threshold $\beta_c$. 
The bifurcation diagram in Figure \ref{fig:mimura} (Top left panel) shows the region in the $(\beta, q)$ parameter space where conditions (\ref{eq:chemo_cond_ks}) for chemotaxis-driven instability and spatial pattern initiation are verified.   

\section{Numerical reconstruction by data-driven techniques}
\label{numer}

\noindent
We want now to apply the pDMD method, developed by \cite{AMS2024}, to datasets constructed by considering numerical approximations of the spatial models at different time instances. To this aim,  
we consider the following general reaction-diffusion model with chemotaxis term  
\begin{equation}
\label{eq:general}
    \begin{cases}
        \partial_t u &=  D_u \Delta u  -\beta \, div(u \nabla v) + f(u,v) \\
\partial_t v &= D_v \Delta v + g(u,v) 
    \end{cases}
\end{equation}
with given initial conditions and homogeneous Neumann boundary conditions. 
In order to build the datasets to feed the pDMD method, we first present a numerical approximation of \eqref{eq:general}
in space and time.

\noindent Firstly, we rewrite the chemotaxis in terms of both cross and nonlinear diffusions, exploiting the following  equivalence
$$- \, div(u \nabla v)\, =\, -\Delta(u\, v) + div(\nabla u\, v).$$

\noindent Then, we derive the ODE system arising from the spatial semi-discretization of \eqref{eq:general} by the Method of
Lines (MOL) based on finite differences of second order with step sizes $h_x =
\frac{Lx}{n_x-1}$, $h_y = \frac{L_y}{n_y-1}$ and $n_x$, $n_y$ meshpoints on the rectangular domain $\Omega = [0,L_x] \times [0, L_y]$, suitably modified for assure zero-Neumann conditions on the boundary (see \ref{app_discretization} for more details).  

\noindent This yields to the resulting ODE system
\begin{equation}
    \label{eq:ode}
    \begin{cases}
        \dot{u} &= D_u \, A u + f_\beta(u,v) \\ 
        \dot{v} &= D_v \, A v + g(u,v) \\
    \end{cases}
\end{equation}
where $A$ is the matrix that approximates the Laplacian operator, whereas the function $f_\beta(u,v)$ takes into account both the discretization of the chemotaxis term and the values of the function $f(u,v)$ on the spatial grid. 

\noindent The temporal approximation of the Equation \eqref{eq:ode}
can be done using classical Runge–Kutta and linear multistep methods. Recently, explicit version of some of the above schemes have been implemented in VisualPDE \cite{walker2023visualpde} a flexible, plug-and-play PDE solver that runs in a web browser on a user’s device. In general,
VisualPDE enables the simple, accessible exploration of the features of model  solutions. However, since it relies on general-purpose explicit time-stepping schemes, certain characteristics of special systems (such as those with conserved quantities and positivity preservation \cite{beck2015positivity}, \cite{diele2018positive}) may not be adequately captured, unlike what could be achieved with tailored numerical methods, such as  geometric integrators \cite{hairer2006geometric}. The 
relevance of using geometric numerical integration  for ecological applications has been stated with several examples in \cite{diele2019geometric}. Interestingly, it has been demonstrated that an inadequate time-stepping procedure can mistakenly suggest the emergence of Turing patterns in parameter spaces where the homogeneous steady-state solution remains stable. 

\noindent The effectiveness of symplectic integrators in managing oscillating patterns was observed and examined in \cite{settanni2016devising}, demonstrating the superiority of implicit-symplectic procedures \cite{diele2014imsp} over some widely-used classical approaches.  However, to our knowledge, such procedures have never been applied to reproduce spatially stable patterns resulting from chemotactic movement. For this reason, to generate datasets suitable for use with the pDMD method, we rely on geometric numerical integrators, with the additional aim of exploring their effectiveness within this novel framework.

\noindent For the purposes of our investigation, we limit to use three distinct first-order schemes for approximating the pattern solutions induced by chemotaxis in the ODE system (\ref{eq:ode}).
 The first scheme is the Symplectic Euler method \cite{hairer2006geometric}, which explicitly treats one variable and implicitly handles the other. Treating the variable $v$ implicitly, and explicitly the variable $u$, the Symplectic Euler scheme applied to (\ref{eq:ode}) advances in time according to: 
\begin{equation}
    \label{eq:SE}
    \begin{array}{rcl}
        u_{n+1} &= u_n \,+\,h_t\,  D_u \, A u_n + h_t\, f_\beta(u_n,v_{n+1}) \\ 
         v_{n+1}  &= v_n \,+\, h_t\,  D_v \, A v_{n+1} + h_t \, g(u_n,v_{n+1}) \\
    \end{array}
\end{equation}

\noindent As the explicit treatment of the diffusive term in \eqref{eq:SE} may be source of stiffness, in \cite{settanni2016devising} a modification of the above scheme has been considered. 
The scheme, referred to as IMSP\_IE  proceeds according to 
\begin{equation}
    \label{eq:Eulero_del_piffero}
    \begin{array}{rcl}
        u_{n+1} &= u_n \,+\, h_t\,  D_u \, A u_{n+1} + h_t\, f_\beta(u_n,v_{n+1}) \\ 
         v_{n+1} &= v_n \,+\, h_t\,  D_v \, A v_{n+1} + h_t \, g(u_n,v_{n+1}) \\
    \end{array}
\end{equation}
Finally, we describe the first order IMSP scheme, which has been analyzed and employed in the literature in the context of spatio-temporal chaos \cite{diele2017numerical}. 
The scheme uses an intermediate approximation $\overline{v}$ as follows:
\begin{equation}
    \label{eq:IMSP}
    \begin{array}{rcl}
        \overline{v} &= v_n + h_t\,g(u_n, \overline{v}) \\ 
  \overline{u} &= u_n + h_t\, \,f_\beta( u_n,\overline{v}))\\ 
    u_{n+1}  &= \overline{u} \,+\, h_t  D_u \, A u_{n+1}  \\        v_{n+1}  &= \overline{v} \,+\, h_t  D_v \, A v_{n+1}  
    \end{array}
\end{equation}
Error and stability estimates of a second order IMSP procedure applied in a finite element framework has been provided in \cite{diele2022}. 

\noindent 
Now we combine the theoretical conditions derived in Section \ref{RD_MOMOS} and \ref{RD_Mimura} with the numerical approximation procedures above described,  to numerically obtain different typologies of spatial patterns for the spatial MOMOS and the Mimura-Tsujikawa models. These patterns will be then reconstructed by the mean of the pDMD approach.   

\subsection{Patterns of stripes and spots for the spatial MOMOS model}
\label{sec:momos}

\noindent
We consider the spatial MOMOS model \eqref{eq:model} and assume for the parameters the following numerical values 
\begin{equation}
    \label{eq:momos_parameters}
    D_u = 10^{-3}, \, D_v = 10^{-3}, \, \beta = 0.056,\, k_1 = 0.4, \, k_2 = 0.6, \, q = 0.075, \, c = 10^{-3} 
\end{equation}

\noindent
For this choice conditions (\ref{eq:chemo_condition_momos}) for chemotaxis-driven instability are verified so that spatial pattern initiation can be observed, as shown in the top left panel of Figure \ref{fig:momos}. We consider as initial conditions spatially random perturbation of the homogeneous equilibrium $P_e$
\begin{equation}
    \label{eq:momos_ic}
    u_0(x,y) = u_1 + 10^{-5}\,{\tt rand}(x,y), \, v_0(x,y) = v_1 + 10^{-5}\,{\tt rand}(x,y) 
\end{equation}
with $(x,y) \in \Omega = [0,20] \times [0,20]$. We discretize the spatial domain $\Omega$ with $n_x = n_y = 21$,
such that $n = n_x n_y = 441$ is the total number of grid points. We then integrate in time by using the IMSP scheme \eqref{eq:IMSP} with time step $h_t = 0.1$ until the final time $T = 80000$. 
In the top right panel of Figure \ref{fig:momos}, we show the numerical solution $u$ at the final time of integration. The chemotaxis-driven instability leads to the emergence of stripes patterns. It is worth noting that the same qualitative behaviour has been observed for the variable $v$ (not reported here). 
%
%
\noindent
In the bottom panel of Figure \ref{fig:momos}, we show the behaviour of the spatial mean \eqref{eq:mean} for the variable $u$, 
\begin{equation}
    \label{eq:mean}
    \langle u(t) \rangle := \frac{1}{| \Omega|} \int_{\Omega} u(x,y,t) \, dx \, dy,
\end{equation}
whose time dynamics confirms that the spatial pattern shown in Figure \ref{fig:momos} (top right) is a stationary solution.
\begin{figure}[t]
    \centering
   \includegraphics[scale=0.45] {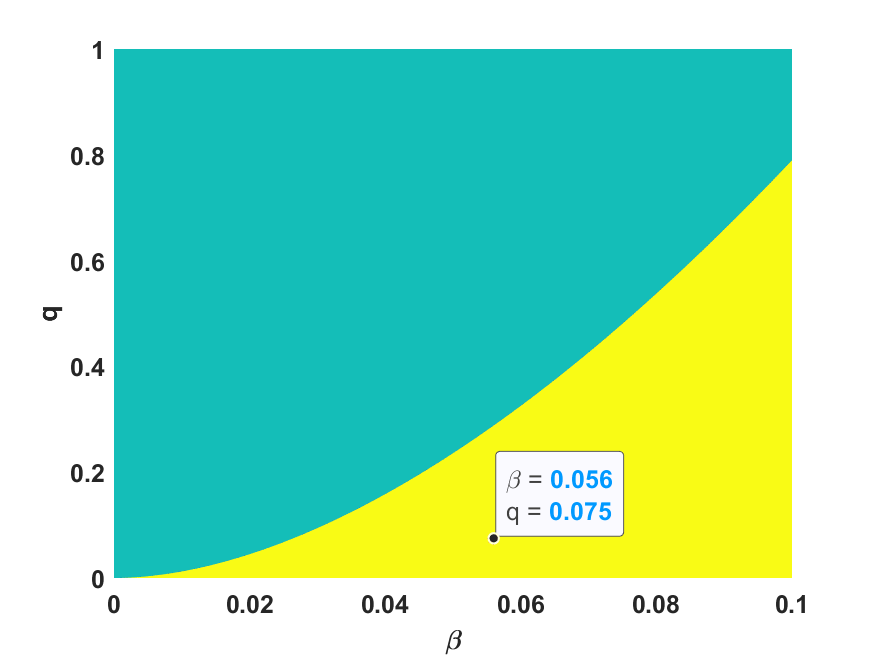} 
    \includegraphics[scale=0.45]{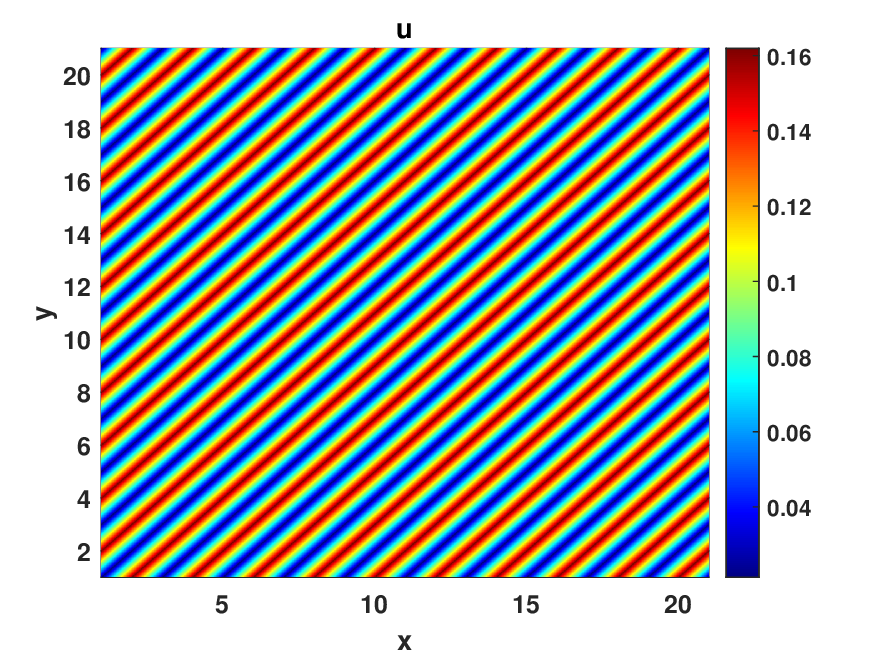}
    \includegraphics[scale=0.45]{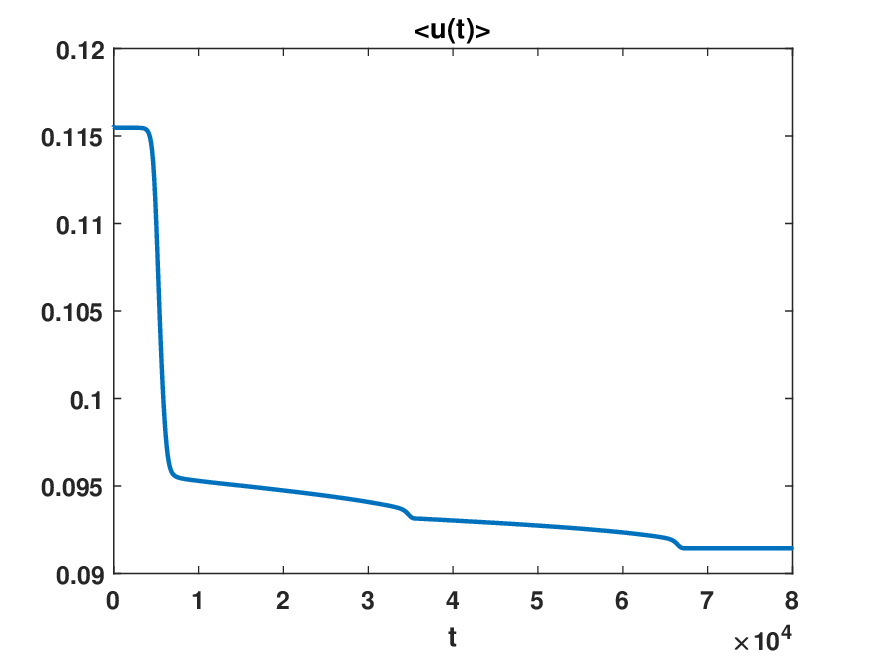}
    \caption{Pattern of \lq \lq stripes\rq \rq for the spatial MOMOS model - Top left plot: bifurcation diagram in the $(\beta,q)$ parameter space. The other parameters are fixed and have the following numerical values: $k_1 = 0.4, k_2 = 0.6, D_u = 10^{-3}, D_v = 10^{-3}, c = 10^{-3}$. The yellow area indicates the region in the parameter space $(\beta, q)$ where conditions \eqref{eq:chemo_condition_momos} are verified whereas parameters in the green area do not satisfy these conditions.
    Top right plot: numerical solution at the final time $T = 80000$ for the state variable $u$. Bottom: time dynamics of the spatial mean of the solutions $u$.} 
    \label{fig:momos}
\end{figure}


\noindent
To obtain a different pattern typology, we consider the same model \eqref{eq:general} with a different choice for the parameters:
\begin{equation}
    \label{eq:momos_param_spots}
    D_u = 0.6, \, D_v = 0.6, \, \beta = 1.2,\, k_1 = 0.4, \, k_2 = 0.6, \, q = 0.075, \, c = 0.8 
\end{equation}
Also in this case conditions \eqref{eq:chemo_condition_momos} are verified so that chemotaxis-driven pattern formation can be observed (see Figure \ref{fig:momos_spots}, top left panel).
We choose as initial conditions 
\begin{equation}
    \label{eq:ic_momos_spots}
    u_0(x,y) = u_1 + 10^{-5}\,{\tt rand}(x,y), \, v_0(x,y) = v_1
\end{equation}
a random perturbation of the first component $u_1$ of the equilibrium $P_e$. The spatial domain $\Omega = [0,25] \times [0,25]$ is discretized with $h_x = h_y = 0.5$, such that $n_x = n_y = 51$ and $n = 2601$ is the number of grid points. The resulting ODE system is integrated by using the IMSP\_IE scheme \eqref{eq:Eulero_del_piffero} with step size $h_t = 0.01$ until $T = 5000$, where spots patterns emerge, as shown in Figure \ref{fig:momos_spots} (top right panel). The temporal behaviour of the spatial mean, reported in the bottom panel of Figure \ref{fig:momos_spots}, confirms that the spots are steady state patterns.

\begin{figure}[htbp]
    \centering
    \includegraphics[scale=0.45]{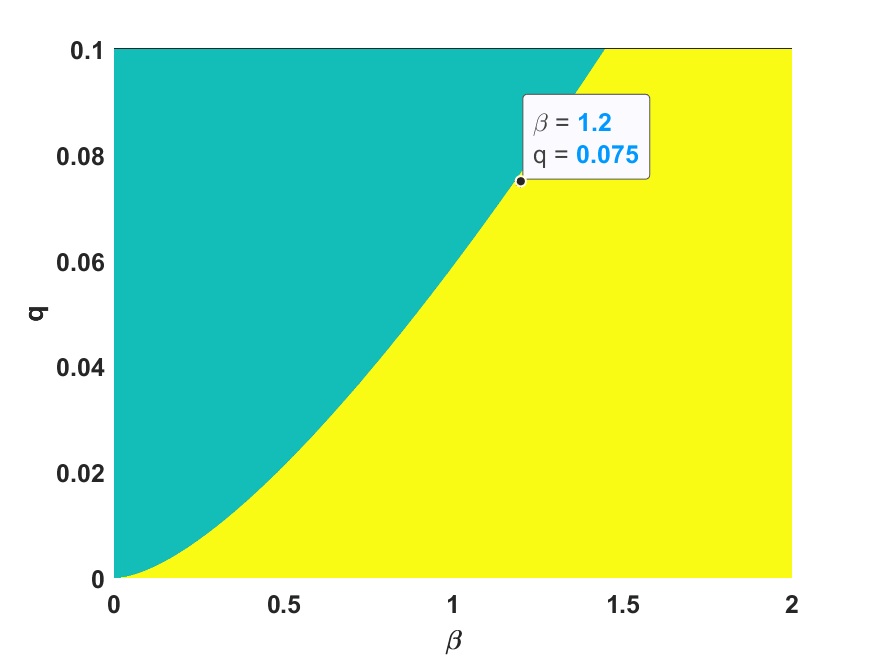}
    \includegraphics[scale=0.45]{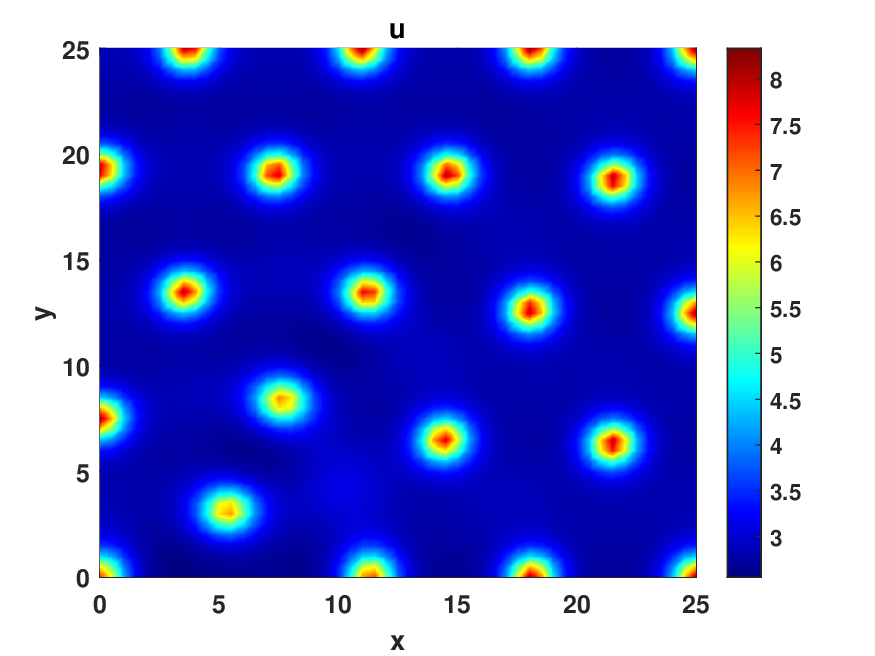}
    \includegraphics[scale=0.45]{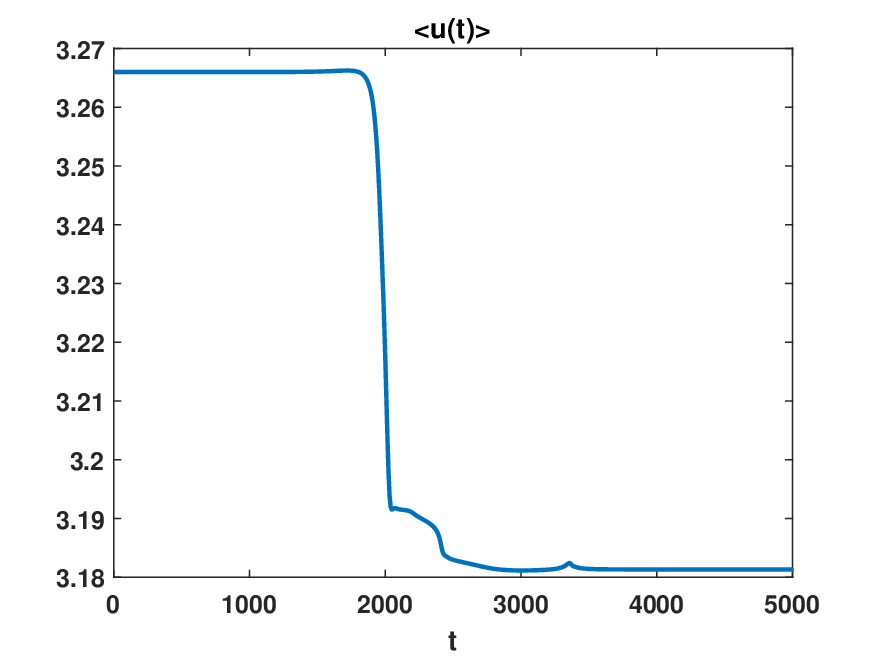}
    \caption{Pattern of \lq \lq spots\rq \rq for the spatial MOMOS model - Top left plot: bifurcation diagram in the $(\beta,q)$ parameter space. The other parameters are fixed and have the following numerical values: $k_1 = 0.4, k_2 = 0.6, D_u = 0.6, D_v = 0.6, c = 0.8$. The yellow area indicates the region in the parameter space $(\beta, q)$ where conditions \eqref{eq:chemo_condition_momos} are verified whereas parameters in the green area do not satisfy these conditions. Top right plot: numerical solution at the final time $T = 5000$ for the state variable $u$. Bottom: time dynamics of the spatial mean of the solutions $u$.}
    \label{fig:momos_spots}
\end{figure}


\subsection{Hexagonal patterns for the Mimura-Tsujikawa model}
\label{sec:mimura}

\noindent
We consider the Mimura-Tsujikawa model \eqref{eq:ks} and assume for the parameters the following set of numerical values, \cite{kuto2012}
\begin{equation}
    \label{eq:mimura_parameters}
    D_u = 0.0625, \, D_v = 1, \, \beta = 17, \, k_1 = 1, \, k_2 = 32, \, q = 7.
\end{equation}

\noindent
For this choice conditions \eqref{eq:chemo_cond_ks} are verified
and the emergence of spatial patterns induced by chemotaxis can be observed, as stressed by the bifurcation diagram depicted in the top left panel of Figure \ref{fig:mimura}. We choose as initial conditions %
\begin{equation}
    u_0(x,y) = u^* + 0.05 \, {\tt rand}(x,y), \, v_0(x,y) = v^*
\end{equation}
with $(x,y) \in \Omega = [0,3] \times [0,3]$. The spatial domain $\Omega$ is discretized by using $n_x = n_y = 50$ meshpoints, such that $n = 2500$. We use the Symplectic Euler scheme \eqref{eq:SE} to integrate the resulting ODE system, with time step $h_t = 10^{-3}$ and final time $T = 500$.
\begin{figure}[t]
    \centering
    \includegraphics[scale=0.45]{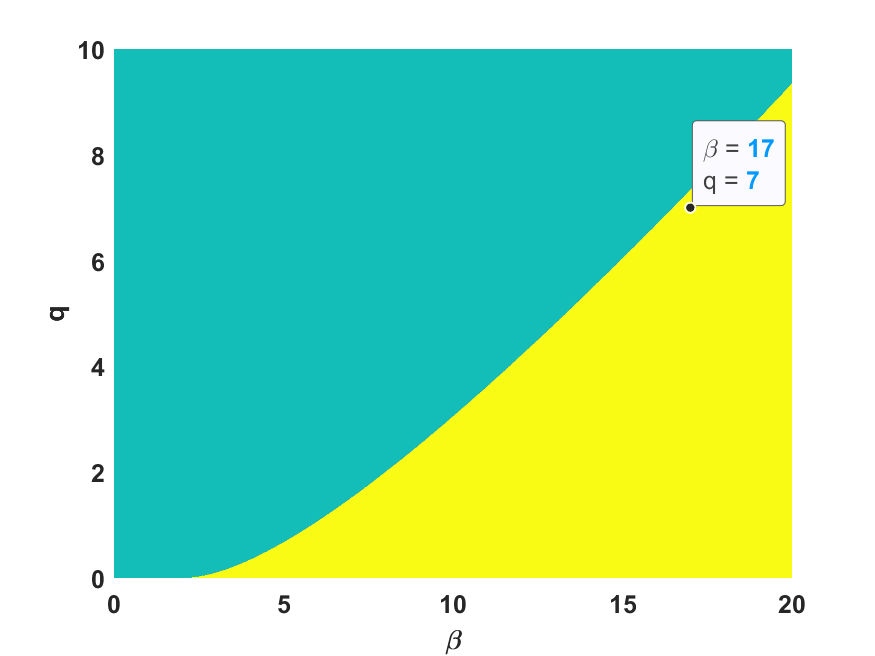}
    \includegraphics[scale=0.45]{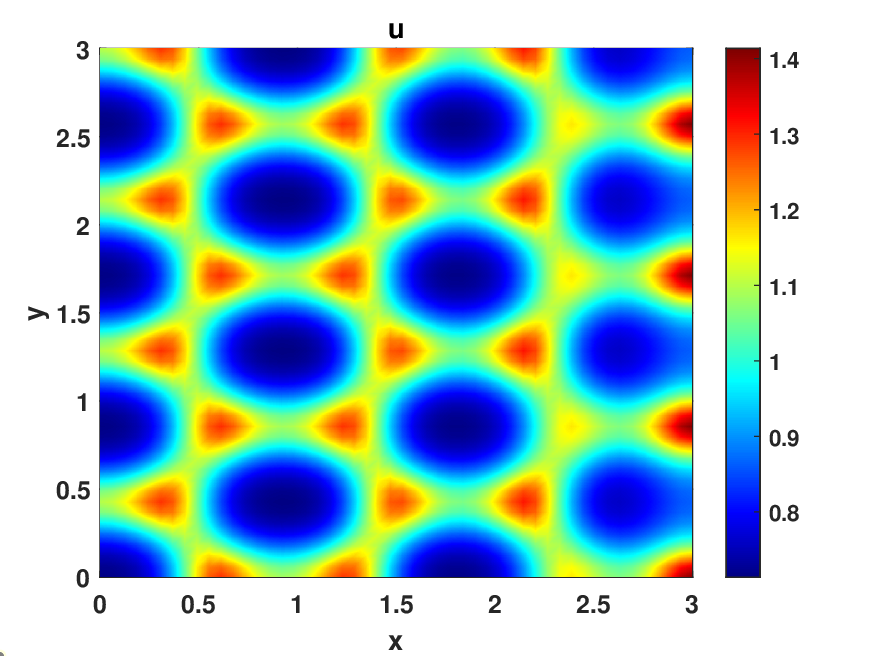}
    \includegraphics[scale=0.45]{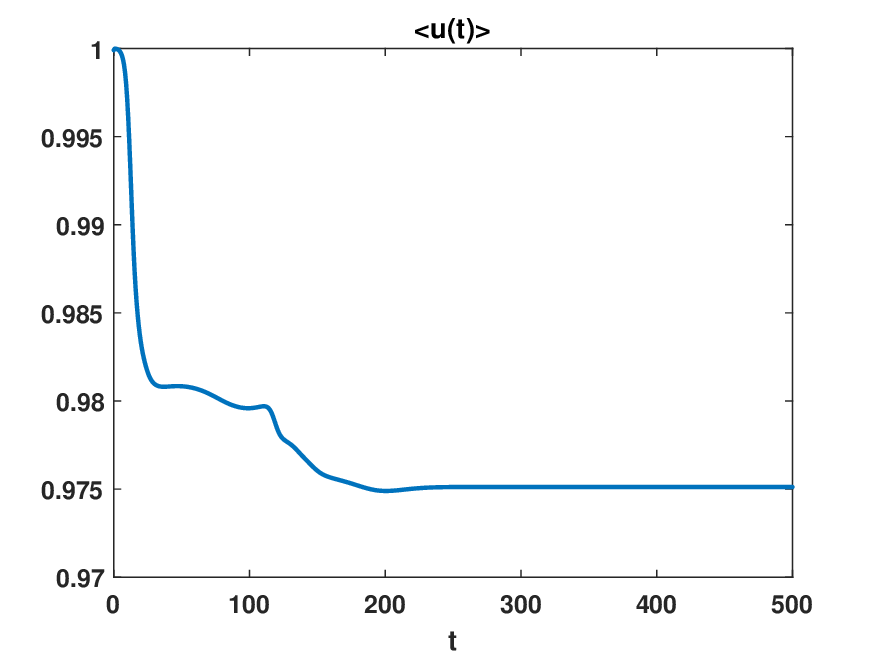}
    \caption{Hexagonal spatial patterns for Mimura-Tsujikawa model - Top left plot: bifurcation diagram in the $(\beta,q)$ parameter space. The other parameters are fixed and have the following numerical values: $k_1 = 1, k_2 = 32, D_u = 0.0625, D_v = 1$. The yellow area indicates the region in the parameter space $(\beta, q)$ where conditions \eqref{eq:chemo_cond_ks} are verified whereas parameters in the green area do not satisfy these conditions. Top right plot: numerical solution at the final time $T = 500$ for the state variable $u$. Bottom: time dynamics of the spatial mean of the solutions $u$.}
    \label{fig:mimura}
\end{figure}
The hexagonal spatial patterns obtained at the final time $T = 500$ are depicted in Figure \ref{fig:mimura} for the unknown $u$ (top right panel). A similar behaviour has been found also for the variable $v$, that is not reported.
%
%
As for the MOMOS model, the time dynamics of the spatial mean, reported in the bottom panel of Figure \ref{fig:mimura} for $u$, indicates that a stationary solution has been obtained.

\noindent 
We want now to apply the pDMD procedure to reconstruct the different class of chemotaxis-driven spatial patterns obtained in the two previous subsections. 

\subsection{Applying the pDMD procedure}
\noindent
Given a snapshot matrix $X$, whose columns are experimental or numerical data at different time instances, the DMD method, firstly introduced by \cite{Schmid_2010}, tries to identify the underlying process by fitting the dynamics hidden in the snapshots, as accurate as possible in a linear sense. Later, \cite{Tu_2014} proposed the \emph{exact} DMD based on a low rank approximation of the original algorithm. Afterwards, many other modifications have been proposed to improve the DMD method (see e.g. \cite{HH17, AK2018,EDMD2015,LV17}). 

\noindent
Here, we consider the application of the pDMD algorithm, introduced by \cite{AMS2024} to deal with datasets that exhibit a peculiar dynamics that the DMD method is not able to reconstruct accurately. First of all, we build the snapshot matrix (the dataset) that is the input of the pDMD algorithm.\\
Let $u_k$ and $v_k \in \mathbb{R}^n$ be the numerical solutions of \eqref{eq:general} at time instances $t_k$, with $k = 1, \dots, m$ and $t_k = t_1 + (k-1) h_t$. We construct the snapshot matrix 
\begin{equation}
    \label{eq:snapshot}
    X = \left[ \begin{array}{cccc}
         | & | & \dots & | \\
u_1 & u_2 & \dots & u_{m} \\
| & | & \dots & | \\
v_1 & v_2 & \dots & v_{m} \\
| & | & \dots & | \\
\end{array}\right] \in \mathbb{R}^{2n \times m},
\end{equation}
whose columns are obtained by concatenating the unknowns $u$ and $v$ at each time instance. This approach, also known as \emph{coupled} \cite{VBGRC22}, reconstructs all together the variables involved in the model, so that each unknown has a knowledge of the other one and vice versa. It is worth noting that one could also consider an \emph{uncoupled} approach, that reconstructs separately each variable $u$ and $v$, without taking into account any possible relation between the unknowns.

\noindent
The main assumption underlying the original DMD algorithm consists in a \lq \lq global\rq \rq linear fitting overall the time interval. Instead of considering the whole time interval, the pDMD method splits $[0,T]$ in $N \geq 1$ parts and makes use of the DMD algorithm in each obtained subinterval. The method is summarized in Algorithm \ref{alg:pDMD} and explained below.

\begin{algorithm}[bth]
\caption{Piecewise DMD (pDMD)}
\label{alg:pDMD}
\begin{algorithmic}[1]
\STATE {\bf INPUT} Dataset $X \in R^{2n \times m}$ in $[0,T]$, initial number $N$ of partitions, thresholds $tol >0$ and $\overline{tol} > 0$
\vspace{0.1cm}
\STATE {\bf OUTPUT} $\widetilde{ X}^{N}$ piecewise reconstruction in $[0,T]$ 
\STATE Split the datasets $X = [X_1, \ldots, X_i, \ldots, X_N]$
\FOR{$i = 1, \ldots, N$}
\STATE set the target rank $r_i = \mbox{rank}(X_i)$  
\STATE compute the (randomized) DMD solution $\widetilde{X}_i$ of rank $r_i$ using Algorithm 3.1 in \cite{EMKB2019}
\STATE compute the error $err(i)$ defined in \eqref{e_max} 
\IF{ $err(i) > tol$}
\STATE    $N = N+1$ 
\STATE   go to step 3
\ENDIF
\ENDFOR
\STATE compute the error $\mathcal{E}(N)$ defined in \eqref{err_frob}
\IF{ $\mathcal{E}(N) > \overline{tol}$}
\STATE    $N = N+1$ 
\STATE   go to step 3
\ENDIF
\STATE $\widetilde{X}^{N} = [\widetilde{X}_1, \ldots, \widetilde{X}_N]$
\end{algorithmic}
\end{algorithm}

\noindent
Given a dataset $X \in \mathbb{R}^{2n \times m}$ and an initial number $N$ of partitions, the pDMD Algorithm \ref{alg:pDMD} starts by splitting the snapshot matrix $X$ in submatrices $X_i \in \mathbb{R}^{2n \times \nu}$, for $i = 1, \ldots, N$, where $\nu = \left[\frac{m}{N} \right]$ is the number of snapshots in $S_i$. In other words, the $i$-th submatrix $S_i$ corresponds to the time subinterval $\left[t_{(i-1) \nu + 1}, t_{i \nu} \right]$. Then, for $i = 1, \ldots, N$, we consider $r_i = {\tt rank}(X_i)$ and compute the (randomized) DMD \cite{EMKB2019} of $X_i$, to obtain a reconstruction $\widetilde{X}_i$. Then we compute the relative error
\begin{equation}
\label{e_max}
err(i) = \max_{(i-1)\nu +1 \leq k \leq i \nu} \frac{\|x_k - \tilde{x}_k \|_{\infty}}{\| x_k \|_{\infty}}
\end{equation}
that is the worst approximation in the time subinterval $\left[t_{(i-1) \nu + 1}, t_{i \nu} \right]$. If this error is greater than a desired chosen tolerance $tol$, then the pDMD algorithm increases the value of $N$ and restart with a new finer partition. Moreover, to ensure that the Algorithm returns a good reconstruction of the dataset, we will also consider
the relative error in Frobenius norm between the dataset $X$ and its piecewise DMD reconstruction $\widetilde{X}^N$ defined by
\begin{equation}
    \label{err_frob}
    \mathcal{E}(N) = \frac{\|X-  \widetilde{X}^N\|_F}{\|X\|_F},
\end{equation}
that depends on the number $N$ used to split the whole dataset $X$. If this error is greater than a chosen threshold $\overline{tol}$, then the pDMD increases the number $N$ of subintervals and restart with this new partition. The Algorithm \ref{alg:pDMD} returns the reconstruction $\widetilde{X}^N$. 

\noindent
In our numerical simulations, we will show the solution at the final time reconstructed by the pDMD method for a number $N$ of partitions of the time interval. Moreover, we will also consider the relative error over time, defined as
\begin{equation}
\label{error_time}
\epsilon(N,t_k) = \frac{\| x_k - \tilde{x}_k \|_F}{\| x_k \|_F}, \quad k = 1, \dots, m
\end{equation}
where $\{ \tilde{x}_k \}_{k=1}^m$ are the snapshots reconstructed by the pDMD with $N$ partitions.

\noindent
First we consider the numerical solutions obtained in Section \ref{sec:momos} by the MOMOS model with parameters as in \eqref{eq:momos_parameters}. We construct the dataset by saving the snapshots every eight time steps, such that $X \in \mathbb{R}^{2n \times m}$, with $n = 441$ and $m = 100000$. We apply the pDMD method by choosing $N = 1$ and thresholds $tol = 10^{-1}$ and $\overline{tol} = 10^{-3}$ as input of Algorithm \ref{alg:pDMD}. By incrementing the partition size $N$ by one, the pDMD Algorithm \ref{alg:pDMD} returns a reconstruction $\widetilde{X}^N$ with $N = 8$, that is the first $N$ value that satisfies the condition $\mathcal{E}(N) \leq \overline{tol}$ (see step 14 of Algorithm \ref{alg:pDMD}). 

\begin{figure}[htbp]
    \centering
     \includegraphics[scale=0.45]{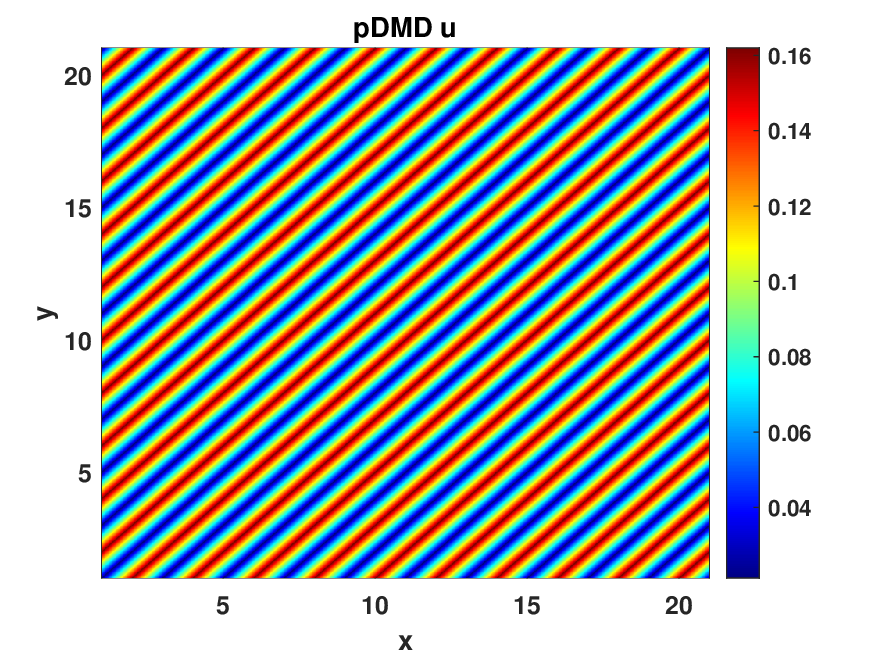}
    \includegraphics[scale=0.45]{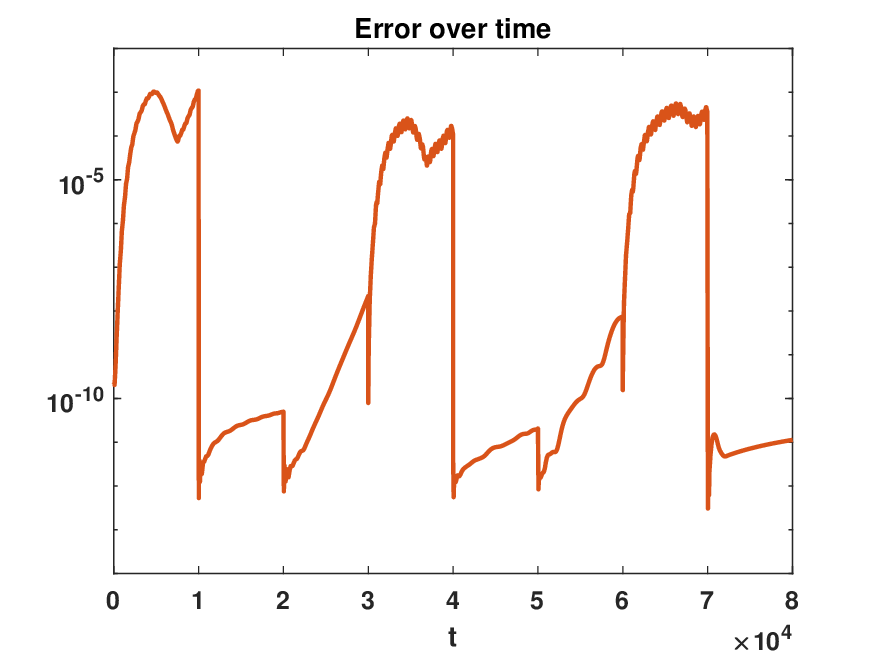}
    \caption{Reconstructed stripes patterns (spatial MOMOS model). Left plot: pDMD reconstruction of the state variable $u$ at the final time $T = 80000$. The number of partition used in the pDMD algorithm is $N = 8$. Right plot: relative error behaviour \eqref{error_time} over time.}
    \label{fig:momos_pdmd}
\end{figure}
\noindent
In the left panel of Figure \ref{fig:momos_pdmd}, we show the spatial pattern at the final time $T = 80000$ reconstructed by the pDMD method for the variable $u$. It is evident that pDMD carefully matches the microbial mass $u$ at the final time.
%

\noindent
Moreover, in the right panel of Figure \ref{fig:momos_pdmd}, we report the relative error \eqref{error_time} over time of the pDMD reconstruction $\widetilde{X}^8$. The error exhibits an erratic behaviour, with peaks at the end of each subinterval $\left[t_{(i-1) \nu + 1}, t_{i \nu} \right]$. Nevertheless, it is less than $10^{-3}$ at each time step. In particular, the error for the variable $u$ at the final time is $7.0884 \times 10^{-12}$. Finally, we also compare the results in terms of computational cost. The IMSP scheme needs about $309.8045$ seconds to solve system \eqref{eq:general}, whereas the pDMD (with $N = 8$ intervals) takes $0.9055$ seconds to reconstruct the solution, with a speed up factor of $342.14$.  \\

\noindent
To show that the pDMD procedure can reconstruct a different typology of patterns, we also consider the spatial MOMOS model with parameters as in \eqref{eq:momos_param_spots}. We recall that, for this choice,  patterns of spots emerge. The dataset is built by saving the snapshots, computed in Section \ref{sec:momos}, every five time steps, such that $X \in \mathbb{R}^{5202 \times 100000}$. We choose $N = 1$ and tolerances $tol = 10^{-1}$ and $\overline{tol} = 10^{-4}$ as input for the pDMD algorithm, that stops when $N = 32$, that corresponds to the first $N$ value for which condition $\mathcal{E}(N) \leq \overline{tol}$ is verified. Thus, it returns the reconstruction $\widetilde{X}^{32}$. In Figure \ref{fig:momos_spots_pdmd}, we show the pDMD reconstruction of the microbial mass $u$ at the final time. The pDMD procedure approximates quite well the shape of the final pattern and also its amplitude (see the colorbar and compare it with Figure \ref{fig:momos_spots}, left plot). As for the previous example, in the right panel of Figure \ref{fig:momos_spots_pdmd}, we show the behavior of the error $\epsilon(N,t_k)$ \eqref{error_time} along the integration
time interval of the PDE model, for $N = 32$. The error is very low in the first and last part of the time interval, attaining an order of about $10^{-12}$, except for peaks of maximum ($\approx 10^{-3}$) that seem to correspond to the major variability in the spatial mean (for $t \approx 2000$, see Figure \ref{fig:momos_spots} right). From a computational point of view, the pDMD with $N = 32$ is able to reconstruct the solution by taking $1.6908$ seconds, compared to the $1.6879 \times 10^3$ seconds needed by the IMSP\_IE scheme, with a speed up factor of $998.34$. 

\begin{figure}[tbhp]
    \centering
     \includegraphics[scale=0.45]{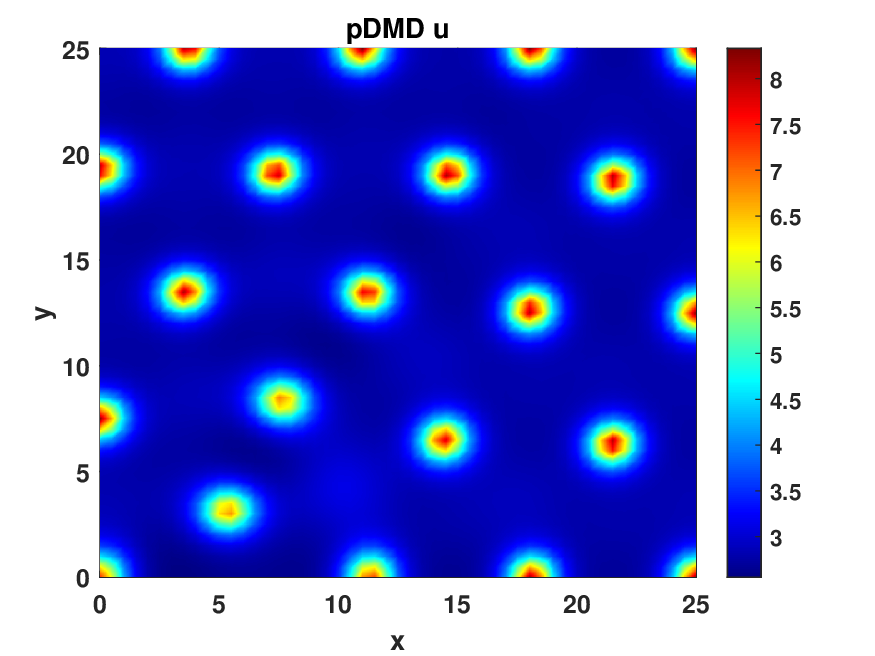}
        \includegraphics[scale=0.45]{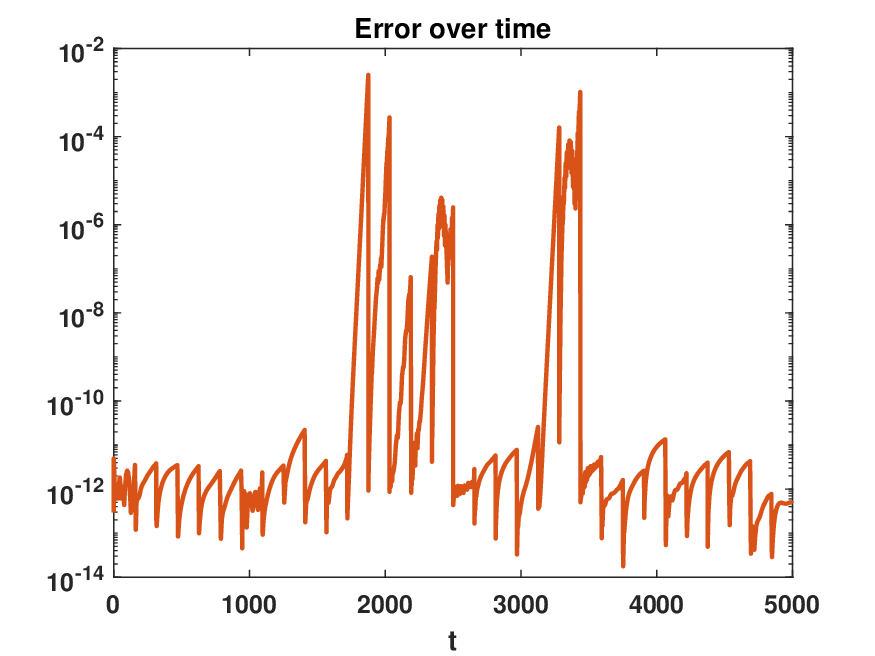}
    \caption{Reconstructed spots patterns (spatial MOMOS model). Left plot: pDMD reconstruction of the state variable $u$ at the final time $T = 5000$, obtained with $N = 32$ partitions. Right plot: relative error behaviour \eqref{error_time} over time.}
    \label{fig:momos_spots_pdmd}
\end{figure}


\noindent
Finally we want to reconstruct the hexagonal patterns of the Mimura-Tsujika\-wa model, obtained in Section \ref{sec:mimura} when the numerical values of the parameters are chosen as in \eqref{eq:mimura_parameters}. To this aim, we construct the dataset $X$ by saving the snapshots every five time steps, such that the snapshot matrix $X$ has dimension $2n \times m$, with $n = 2500$ and $m = 100000$. To start the pDMD algorithm, we choose $N=1$ and thresholds $tol = 10^{-1}$ and $\overline{tol} = 10^{-5}$. The pDMD algorithm stops when $N = 15$, that is the first $N$ value that verifies the condition $\mathcal{E}(N) \leq \overline{tol}$.

\noindent
The pDMD reconstruction of the state variable $u$ at the final time $T = 5000$ is reported in Figure \ref{fig:mimura_pdmd}, left plot. The hexagons are approximated quite well by the pDMD both in shape and in amplitude (see the colorbar and compare it with Figure \ref{fig:mimura}, left plot). As for the previous examples, in the right
panel of Figure \ref{fig:mimura_pdmd}, for $N = 15$, we show the behavior of the relative error \eqref{error_time} along the
integration time interval. The error is greater in the first part, where the spatial mean is rapidly decreasing from the initial time. Then, for $t \geq 135$, the error decreases and attains almost an order of $10^{-12}$. Finally, we compare the results with the symplectic first order integrators, in terms of the CPU time (in seconds). The pDMD needs $1.4396$ seconds, whereas the Symplectic Euler scheme takes $473.9948$ seconds, with a speed up factor of $329.25$.
\begin{figure}[htbp]
    \centering
    \includegraphics[scale=0.45]{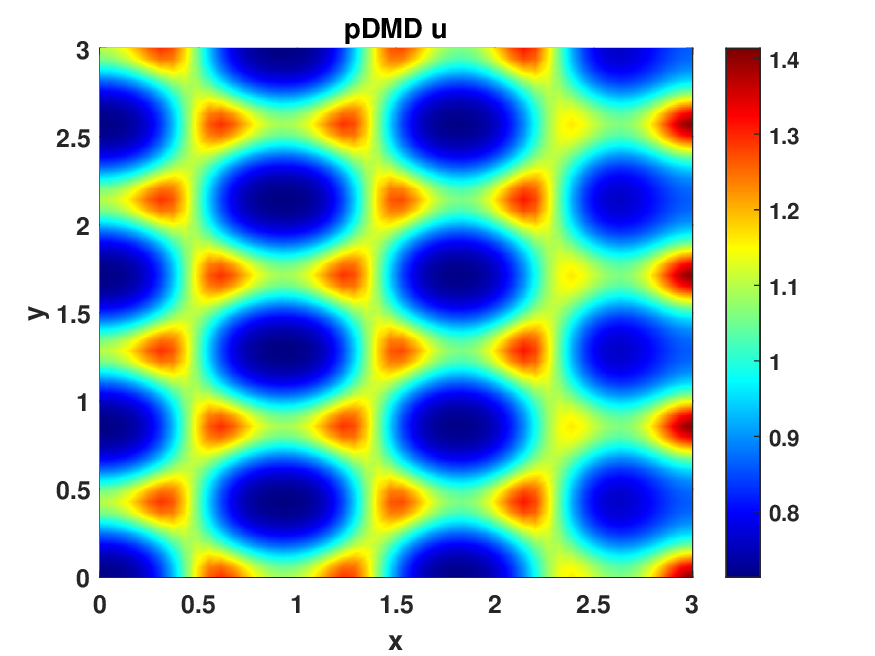}
        \includegraphics[scale=0.45]{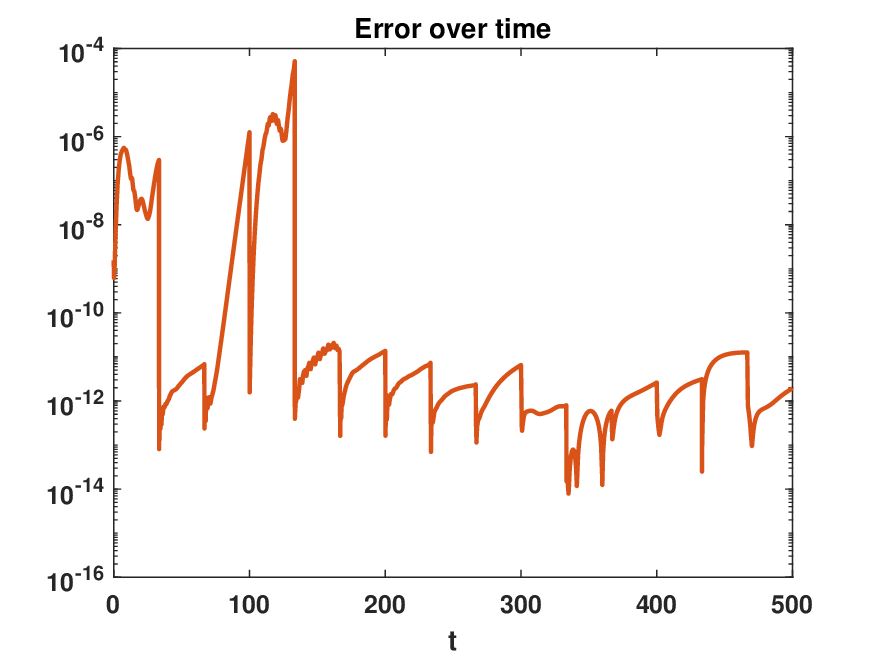}
    \caption{Reconstructed hexagonal spatial patterns (Mimura-Tsujikawa model). Left plot: pDMD reconstruction of the state variable $u$ at the final time $T = 500$, obtained with $N = 15$ partitions. Right plot: relative error behaviour \eqref{error_time} over time.}
    \label{fig:mimura_pdmd}
\end{figure}
\section{Discussion and concluding remarks}
\label{conclusions}

\noindent In this paper, we have addressed the numerical challenges related to spatial organization phenomena in two reaction-diffusion chemotaxis systems: the spatial MOMOS model \cite{hammoudi2018mathematical}  and the Mimura-Tsujikawa model \cite{MIMURA1996, kuto2012}, both supporting chemotaxis-driven pattern formation. We have shown how symplectic procedures could accurately reproduce spatial patterns like spots, stripes and hexagons generated by bacteria. Even if our numerical tests focus solely on the results obtained through symplectic methods, some preliminary comparisons with classical approaches, as IMEX methods, revealed a more favorable behavior of the symplectic methods. Such a point certainly deserves to be further explored from a theoretical point of view. To this aim we want to follow the approach considered in previous works \cite{settanni2016devising}, and focussed on Turing patterns \cite{sgura2012numerical, diele2019geometric}, to derive conditions necessary for the emergence of chemotaxis-driven patterns within a discrete framework. This will also allow us to  establish whether and when the symplectic structure of the discrete flow can offer any advantages. 

\noindent
Even though the symplectic schemes could accurately approximate spatial patterns emerging from chemotaxis-driven instability, their numerical approximation could be very demanding. For this reason, by following the approach in \cite{AMS2024}, we have considered the application of the pDMD to the reaction-diffusion models with chemotaxis showing that very accurate reconstructions can be obtained by this data-driven method. In addition, the application of the pDMD algorithm turned out to be more efficient in terms of the computational cost. In this study we have applied the pDMD method to datasets consisting of numerical approximations of the PDE models at difference time instances. However, one of the main strengths of data-driven techniques is that they do not require necessarily the knowledge of the model's governing equations so that they could be directly applied to field data and could provide future predictions for moderate time horizons (see e.g. \cite{AMS2024}). For this reason, our further step will be to apply data-driven methods to experimental data, such as spatio-temporal data of soil microorganism, to reproduce observed spatial patterns and possibly gain predictions on their spatio-temporal dynamics. 
This may be especially significant since it is difficult to generalize observed patterns and obtain a predictive understanding of the dynamics of soil microbial communities because of the interaction of geochemical and biological processes, as well as the effects of particular environmental conditions \cite{Patoine_2022}. Following that, data-driven spatio-temporal models of soil biological communities can be a precious tool in  identifying areas of the world that may undergo changes and in creating management and conservation plans that are suitable for those areas.

\noindent
Developing and implementing such models not only leads to a theoretical advancement in the understanding of the many actors, feedbacks and interactions involved in the SOC dynamics   but can also open the way to test innovative and challenging solutions in terms of sustainability.  For example, the importance of chemotaxis-driven patterns and overall spatial organization of soil microorganisms is increasingly recognized across various fields, including circular economy. One of the most promising applications of soil-dwelling microorganisms lies in fact in the ability of some bacteria (Pseudomonas putida) to contribute to the breakdown and assimilation of polyethylene terephthalate (PET), a ubiquitous plastic widely used in food packaging. Research endeavors aim not only to explore the chemical capabilities of these microorganisms but also to improve the efficiency of plastic degradation by leveraging diverse microbial populations and optimizing spatial arrangements \cite{Bao}. Understanding the dynamic distribution of microbial populations in space is crucial in this context, as it can either impede or facilitate the process of plastic deconstruction. Moreover, SOC is crucial  for adapting to expected changes in climate and land use, being at the same time a carbon sink and a way to reduce future $CO_2$ emissions. In fact, if improved farming practices were implemented, soils could absorb more than 10\% of the world's anthropogenic emissions. Governments and stakeholders are then becoming more interested in encouraging farmers to implement cultivation techniques that improve carbon sequestration as part of larger climate change mitigation strategies. Carbon farming is one of them \cite{Singh_2024}. It is a new, sustainable technique that uses photosynthesis to trap greenhouse gases, utilizing environmentally friendly methods like low tillage, agroforestry, cover crops and rotational grazing hence improving soil conservation. Additionally, by incorporating the circular carbon economy's tenets, carbon farming makes it possible to monetize carbon credits, encouraging startups to transform agricultural resources into bio-based goods and hence promoting a circular and sustainable development.


\section*{Code availability}
\noindent
The MATLAB source code for the implementations used to compute the presented results can be downloaded from \url{https://github.com/CnrIacBaGit/PatternChemotaxis}

\section*{Acknowledgements}

\noindent A.M., F.D. and  C.M. research activity is funded by the National Recovery and Resilience Plan (NRRP), Mission 4 Component 2 Investment 1.4 - Call for tender No.
3138 of 16 December 2021, rectified by Decree n.3175 of 18 December 2021 of Italian Ministry of University and Research funded by the European Union – NextGenerationEU; Award Number: Project code CN 00000033, Concession Decree No. 1034 of 17 June 2022 adopted by the Italian Ministry of University and Research, CUP B83C22002930006, Project title \lq \lq National Biodiversity Future Centre.
D.L. research has been developed within the Project ”P2022PSMT7” CUP H53D23008940001 funded by EU in NextGenerationEU plan through the Italian "Bando Prin 2022 - D.D. 1409 del 14-09-2022" by MUR.

\noindent F.D. and A.M. are members of the INdAM research group GNCS; D.L. is member of the INdAM research group GNFM. F.D. and C.M. would like to thank Mr. Cosimo Grippa for his valuable technical support. 

\appendix

\section{Spatial discretization}
\label{app_discretization}
\noindent
In this section we focus on the spatial semi-discretization of system \eqref{eq:general}. We consider a rectangular domain  $\Omega = [0,L_x] \times [0, L_y]$, with $n_x$, $n_y$ meshpoints, such that the stepsizes are $h_x =
\frac{Lx}{n_x-1}$, $h_y = \frac{L_y}{n_y-1}$. We compute the first and second order derivatives on the spatial grid by using central finite differences
\begin{equation}
\begin{aligned}
    \frac{\partial u(x,y)}{\partial x} &= \frac{u(x+h_x,y)-u(x-h_x,y)}{2 \, h_x}, \\
    \frac{\partial u(x,y)}{\partial y} &= \frac{u(x,y+h_y)-u(x,y-h_y)}{2 \, h_y},
    \end{aligned}
\end{equation}

\begin{equation}
    \begin{aligned}
    \frac{\partial^2 u(x,y)}{\partial x^2} &= \frac{u(x+h_x,y)-2 u(x,y) + u(x-h_x,y)}{h_x^2}, \\
    \frac{\partial^2 u(x,y)}{\partial y^2} &= \frac{u(x,y+h_y)-2 u(x,y) + u(x,y-h_y)}{h_y^2}.
    \end{aligned}
\end{equation}
We approximate the homogeneous Neumann boundary conditions as follows
\begin{equation}
\begin{aligned}
    \frac{\partial u (0,y)}{\partial x} &= \frac{u(0+h_x,y)-u(0-h_x,y)}{2\,h_x} = 0, \\
    \frac{\partial u (L_x,y)}{\partial x} &= \frac{u(L_x+h_x,y)-u(L_x-h_x,y)}{2\,h_x} = 0 
    \end{aligned}
    \end{equation}
    for the variable $u$ along the $x$ direction and, similarly along $y$.
These yield
    \begin{equation}
   \begin{aligned}
    u(-h_x,y) &= u(h_x,y), \\
    u(L_x+h_x,y) &= u(L_x-h_x,y),
    \end{aligned} 
\end{equation}
by using fictitious nodes $u(-h_x,y)$ and $u(L_x+h_x,y)$. Similarly, we obtain conditions for the variable $v$. \\
Thus the discretization of the diffusion term $\Delta u$
is given by
\begin{equation}
\begin{aligned}
    & \Delta u(x,y) = \frac{\partial^2 u(x,y)}{\partial x^2} + \frac{\partial^2 u(x,y)}{\partial y^2} \\ &= \frac{u(x+h_x,y)-2u(x,y)+u(x-h_x,y)}{h_x^2} + \frac{u(x,y+h_y)-2u(x,y)+u(x,y-h_y)}{h_y^2}.
    \end{aligned}
\end{equation}
Then we consider the following equivalence
$$- \, div(u \nabla v)\, =\, -\Delta(u\, v) + div(v\,\nabla u),$$
that allows us to rewrite the chemotaxis term as sum of a \emph{cross diffusion term} $\Delta(u\,v)$ and a \emph{nonlinear diffusion term} $div(v \, \nabla u)$. As regards the discretization of the last term, along the $x$ direction we have
\begin{equation}
    \begin{aligned}
        \frac{\partial}{\partial x} (v u_x) &= \frac{\left(v(x,y)+v(x+h_x,y)\right)}{2 h_x} \frac{\left(u(x+h_x,y)-u(x,y)\right)}{h_x} \\
        &- \frac{\left(v(x,y)+v(x-h_x,y)\right)}{2 h_x} \frac{\left(u(x,y)-u(x-h_x,y)\right)}{h_x} 
    \end{aligned}
\end{equation}
whereas 
\begin{equation}
    \begin{aligned}
        \frac{\partial}{\partial y} (v u_y) &= \frac{\left(v(x,y)+v(x,y+h_y)\right)}{2 h_y} \frac{\left(u(x,y+h_y)-u(x,y)\right)}{h_y} \\
        &- \frac{\left(v(x,y)+v(x,y-h_y)\right)}{2 h_y} \frac{\left(u(x,y)-u(x,y-h_y)\right)}{h_y} 
    \end{aligned}
\end{equation}
holds along the $y$ direction. Finally the chemotactic term $- \, div(u \nabla v)$ is discretized as
$$-\frac{\partial^2 (u\,v)}{\partial x^2} -\frac{\partial^2 (u\,v)}{\partial y^2} + \frac{\partial}{\partial x} (v u_x) + \frac{\partial}{\partial y} (v u_y).$$


\bibliographystyle{unsrt} 
\bibliography{biblio}

\begin{thebibliography}{10}

\bibitem{Ramesh2019}
Thangavel Ramesh, Nanthi~S. Bolan, Mary~Beth Kirkham, Hasintha Wijesekara,
  Manjaiah Kanchikerimath, Cherukumalli {Srinivasa Rao}, Sasidharan Sandeep,
  Jörg Rinklebe, Yong~Sik Ok, Burhan~U. Choudhury, Hailong Wang, Caixian Tang,
  Xiaojuan Wang, Zhaoliang Song, and Oliver W.{Freeman II}.
\newblock Chapter one - soil organic carbon dynamics: Impact of land use
  changes and management practices: A review.
\newblock volume 156 of {\em Advances in Agronomy}, pages 1--107. Academic
  Press, 2019.

\bibitem{Chalchissa2024}
Fedhasa~Benti Chalchissa and Birhanu~Kebede Kuris.
\newblock Modelling soil organic carbon dynamics under extreme climate and land
  use and land cover changes in {W}estern {O}romia {R}egional state,
  {E}thiopia.
\newblock {\em Journal of Environmental Management}, 350:119598, 2024.

\bibitem{hammoudi2015mathematical}
Alaaeddine Hammoudi, Oana Iosifescu, and Martial Bernoux.
\newblock Mathematical analysis of a nonlinear model of soil carbon dynamics.
\newblock {\em Differential Equations and Dynamical Systems}, 23(4):453--466,
  2015.

\bibitem{Pansu2010}
Marc Pansu, Lina Sarmiento, Maria Rujano, Magdiel Ablan, D.~Acevedo, and
  P.~Bottner.
\newblock Modeling organic transformations by microorganisms of soils in six
  contrasting ecosystems: Validation of the {MOMOS} model.
\newblock {\em Global Biogeochemical Cycles}, 24, 03 2010.

\bibitem{Pansu2004}
Marc Pansu, Pierre Bottner, Lina Sarmiento, and K.~Metselaar.
\newblock Comparison of five soil organic matter decomposition models using
  data from a 14c and 15n labeling field experiment.
\newblock {\em Global Biogeochemical Cycles}, 18, 12 2004.

\bibitem{Pagel2020}
Holger Pagel, Björn Kriesche, Marie Uksa, Christian Poll, Ellen Kandeler,
  Volker Schmidt, and Thilo Streck.
\newblock Spatial control of carbon dynamics in soil by microbial decomposer
  communities.
\newblock {\em Frontiers in Environmental Science}, 8, 01 2020.

\bibitem{Weigh2023}
Katherine Weigh, Bruna Batista, Huong Hoang, and Paul Dennis.
\newblock Characterisation of soil bacterial communities that exhibit
  chemotaxis to root exudates from phosphorus-limited plants.
\newblock {\em Microorganisms}, 11:2984, 12 2023.

\bibitem{Patlak1953}
Clifford~S. Patlak.
\newblock Random walk with persistence and external bias.
\newblock {\em The bulletin of mathematical biophysics}, 15(3):311--338, 1953.

\bibitem{keller1970initiation}
Evelyn~F Keller and Lee~A Segel.
\newblock Initiation of slime mold aggregation viewed as an instability.
\newblock {\em Journal of theoretical biology}, 26(3):399--415, 1970.

\bibitem{Hillen2008}
Thomas Hillen and Kevin Painter.
\newblock A user’s guide to pde models for chemotaxis.
\newblock {\em Journal of Mathematical Biology}, 58:183--217, 08 2008.

\bibitem{Horstman2003}
D.~Horstman.
\newblock From 1970 until now: the keller–segal model in chemotaxis and its
  consequence i.
\newblock {\em Jahresber. Dtsch. Math.}, 105:103--165, 2003.

\bibitem{Murray_book2}
James~Dickson Murray.
\newblock {\em Mathematical biology: II: spatial models and biomedical
  applications}, volume~18.
\newblock Springer, 2003.

\bibitem{Maini_1997}
P.K. Maini, K.J. Painter, and H.N.P. Chau.
\newblock Spatial pattern formation in chemical and biological systems.
\newblock {\em Journal of the Chemical Society, Faraday Transactions},
  93:3601--3610, 1997.

\bibitem{Zhang_2020}
Jia-Fang Zhang, Hong-Bo Shi, and Anotida Madzvamuse.
\newblock Characterizing the effects of self- and cross-diffusion on stationary
  patterns of a predator–prey system.
\newblock {\em International Journal of Bifurcation and Chaos}, 30(03):2050041,
  2020.

\bibitem{Capone_2021}
F.~Capone, M.F. Carfora, R.~De~Luca, and I.~Torcicollo.
\newblock Nonlinear stability and numerical simulations for a
  reaction–diffusion system modelling allee effect on predators.
\newblock {\em International Journal of Nonlinear Sciences and Numerical
  Simulation}, 23:000010151520200015, 08 2021.

\bibitem{Lacitignola_2022}
D.~Lacitignola, M.~Frittelli, V.~Cusimano, and A.~De~Gaetano.
\newblock Pattern formation on a growing oblate spheroid. an application to
  adult sea urchin development.
\newblock {\em Journal of Computational Dynamics}, 9(2):185--206, 2022.

\bibitem{Grifo_2023}
G.~Grifó, G.~Consolo, C.~Curró, and G.~Valenti.
\newblock Rhombic and hexagonal pattern formation in 2d hyperbolic
  reaction–transport systems in the context of dryland ecology.
\newblock {\em Physica D: Nonlinear Phenomena}, 449:133745, 2023.

\bibitem{Mehdaoui_2024}
M.~Mehdaoui, D.~Lacitignola, and M.~Tilioua.
\newblock Optimal social distancing through cross-diffusion control for a
  disease outbreak pde model.
\newblock {\em Communications in Nonlinear Science and Numerical Simulation},
  131:107855, 2024.

\bibitem{Lacitignola_2021}
D.~Lacitignola, I.~Sgura, and B.~Bozzini.
\newblock Turing-hopf patterns in a morphochemical model for electrodeposition
  with cross-diffusion.
\newblock {\em Applications in Engineering Science}, 5:100034, 2021.

\bibitem{Chaplain_1995}
M.~Chaplain, T.~Höfer, P.~Maini, J.~Sherratt, and J.~Murray.
\newblock Resolving the chemotactic wave paradox: a mathematical model for
  chemotaxis of dictyostelium amoebae.
\newblock {\em J Biol Syst}, 3, 12 1995.

\bibitem{Giunta_2021}
V.~Giunta, M.C. Lombardo, and M.~Sammartino.
\newblock Pattern formation and transition to chaos in a chemotaxis model of
  acute inflammation.
\newblock {\em SIAM Journal on Applied Dynamical Systems}, 20(4):1844--1881,
  2021.

\bibitem{Bellomo_2022}
N.~Bellomo, N.~Outada, J.~Soler, Y.~Tao, and M.~Winkler.
\newblock Chemotaxis and cross-diffusion models in complex environments: Models
  and analytic problems toward a multiscale vision.
\newblock {\em Mathematical Models and Methods in Applied Sciences},
  32(04):713--792, 2022.

\bibitem{Bisi_2023}
M.~Bisi, M.~Groppi, G.~Martal{\`o}, and C.~Soresina.
\newblock A chemotaxis reaction–diffusion model for multiple sclerosis with
  allee effect.
\newblock {\em Ricerche di Matematica}, pages 1--18, 2023.

\bibitem{hammoudi2018mathematical}
Alaaeddine Hammoudi and Oana Iosifescu.
\newblock Mathematical analysis of a chemotaxis-type model of soil carbon
  dynamic.
\newblock {\em Chinese Annals of Mathematics, Series B}, 39(2):253--280, 2018.

\bibitem{MIMURA1996}
Masayasu Mimura and Tohru Tsujikawa.
\newblock Aggregating pattern dynamics in a chemotaxis model including growth.
\newblock {\em Physica A: Statistical Mechanics and its Applications},
  230(3):499--543, 1996.

\bibitem{kuto2012}
Kousuke Kuto, Koichi Osaki, Tatsunari Sakurai, and Tohru Tsujikawa.
\newblock Spatial pattern formation in a chemotaxis–diffusion–growth model.
\newblock {\em Physica D: Nonlinear Phenomena}, 241(19):1629--1639, 2012.

\bibitem{Sarker_2022}
T.C. Sarker, M.~Zotti, Y.~Fang, F.~Giannino, S.~Mazzoleni, G.~Bonanomi, Y.~Cai,
  and S.~Chang.
\newblock Soil aggregation in relation to organic amendment: a synthesis.
\newblock {\em Journal of Soil Science and Plant Nutrition}, 22, 03 2022.

\bibitem{Vogel2014}
Cordula Vogel, Carsten Mueller, Carmen Hoeschen, Franz Buegger, Katja Heister,
  Stefanie Schulz, Michael Schloter, and Ingrid Kögel-Knabner.
\newblock Submicron structures provide preferential spots for carbon and
  nitrogen sequestration in soils.
\newblock {\em Nature communications}, 5:2947, 01 2014.

\bibitem{Schmid_2010}
Peter~J. Schmid.
\newblock Dynamic mode decomposition of numerical and experimental data.
\newblock {\em Journal of Fluid Mechanics}, 656, 2010.

\bibitem{Tu_2014}
Jonathan~H. Tu, Clarence~W. Rowley, Dirk~M. Luchtenburg, Steven~L. Brunton, and
  J.~Nathan Kutz.
\newblock On dynamic mode decomposition: Theory and applications.
\newblock {\em Journal of Computational Dynamics}, 1(2):391--421, 2014.

\bibitem{AMS2024}
Alessandro Alla, Angela Monti, and Ivonne Sgura.
\newblock Piecewise {DMD} for oscillatory and {T}uring spatio-temporal
  dynamics.
\newblock {\em Computers \& Mathematics with Applications}, 160:108--124, 2024.

\bibitem{diele2023soc}
Fasma Diele, Ilenia Luiso, Carmela Marangi, and Angela Martiradonna.
\newblock {SOC}-reactivity analysis for a newly defined class of
  two-dimensional soil organic carbon dynamics.
\newblock {\em Applied Mathematical Modelling}, 2023.

\bibitem{walker2023visualpde}
Benjamin~J Walker, Adam~K Townsend, Alexander~K Chudasama, and Andrew~L Krause.
\newblock Visualpde: rapid interactive simulations of partial differential
  equations.
\newblock {\em Bulletin of Mathematical Biology}, 85(11):113, 2023.

\bibitem{beck2015positivity}
M{\'e}lanie Beck and Martin~J Gander.
\newblock On the positivity of poisson integrators for the lotka--volterra
  equations.
\newblock {\em BIT Numerical Mathematics}, 55:319--340, 2015.

\bibitem{diele2018positive}
Fasma Diele and Carmela Marangi.
\newblock Positive symplectic integrators for predator-prey dynamics.
\newblock {\em Discrete \& Continuous Dynamical Systems-B}, 23(7):2661, 2018.

\bibitem{hairer2006geometric}
Ernst Hairer, Marlis Hochbruck, Arieh Iserles, and Christian Lubich.
\newblock Geometric numerical integration.
\newblock {\em Oberwolfach Reports}, 3(1):805--882, 2006.

\bibitem{diele2019geometric}
Fasma Diele and Carmela Marangi.
\newblock Geometric numerical integration in ecological modelling.
\newblock {\em Mathematics}, 8(1):25, 2019.

\bibitem{settanni2016devising}
Giuseppina Settanni and Ivonne Sgura.
\newblock Devising efficient numerical methods for oscillating patterns in
  reaction--diffusion systems.
\newblock {\em Journal of Computational and Applied Mathematics}, 292:674--693,
  2016.

\bibitem{diele2014imsp}
Fasma Diele, Carmela Marangi, and Stefania Ragni.
\newblock Imsp schemes for spatially explicit models of cyclic populations and
  metapopulation dynamics.
\newblock {\em Mathematics and Computers in Simulation}, 100:41--53, 2014.

\bibitem{diele2017numerical}
Fasma Diele, Marcus Garvie, and Catalin Trenchea.
\newblock Numerical analysis of a first-order in time implicit-symplectic
  scheme for predator--prey systems.
\newblock {\em Computers \& Mathematics with Applications}, 74(5):948--961,
  2017.

\bibitem{diele2022}
Fasma Diele, Angela Martiradonna, and Catalin Trenchea.
\newblock Stability and errors estimates of a second-order imsp scheme.
\newblock {\em Discrete and Continuous Dynamical Systems - S},
  15(12):3645--3665, 2022.

\bibitem{HH17}
P.~Héas and C.~Herzet.
\newblock Optimal low-rank dynamic mode decomposition.
\newblock In {\em 2017 IEEE International Conference on Acoustics, Speech and
  Signal Processing (ICASSP)}, pages 4456--4460, 2017.

\bibitem{AK2018}
T.~Askham and J.~N. Kutz.
\newblock Variable projection methods for an optimized dynamic mode
  decomposition.
\newblock {\em SIAM Journal on Applied Dynamical Systems}, 17(1):380--416,
  2018.

\bibitem{EDMD2015}
M.~O. Williams, I.~G. Kevrekidis, and C.~W. Rowley.
\newblock A data--driven approximation of the koopman operator: Extending
  dynamic mode decomposition.
\newblock {\em Journal of Nonlinear Science}, 25:1307--1346, 2015.

\bibitem{LV17}
S.~Le~Clainche and J.~M. Vega.
\newblock Higher order dynamic mode decomposition.
\newblock {\em SIAM Journal on Applied Dynamical Systems}, 16(2):882--925,
  2017.

\bibitem{VBGRC22}
A.~Viguerie, G.~F. Barros, M.~Grave, A.~Reali, and A.~L. G.~A. Coutinho.
\newblock Coupled and uncoupled dynamic mode decomposition in
  multi-compartmental systems with applications to epidemiological and additive
  manufacturing problems.
\newblock {\em Computer Methods in Applied Mechanics and Engineering},
  391:114600, 2022.

\bibitem{EMKB2019}
N.~B. Erichson, L.~Mathelin, J.~N. Kutz, and S.~L. Brunton.
\newblock Randomized {D}ynamic {M}ode {D}ecomposition.
\newblock {\em SIAM Journal on Applied Dynamical Systems}, 18(4):1867--1891,
  2019.

\bibitem{sgura2012numerical}
Ivonne Sgura, Benedetto Bozzini, and Deborah Lacitignola.
\newblock Numerical approximation of turing patterns in electrodeposition by
  adi methods.
\newblock {\em Journal of Computational and Applied Mathematics},
  236(16):4132--4147, 2012.

\bibitem{Patoine_2022}
Guillaume Patoine, Nico Eisenhauer, Simone Cesarz, Helen Phillips, Xiaofeng Xu,
  Lihua Zhang, and Carlos Guerra.
\newblock Drivers and trends of global soil microbial carbon over two decades.
\newblock {\em Nature Communications}, 13, 07 2022.

\bibitem{Bao}
T.~Bao, Y.~Qian, Y~Xin, J.~Collins, and T.~Lu.
\newblock Engineering microbial division of labor for plastic upcycling.
\newblock {\em Nature Communications}, 14:5712, 2013.

\bibitem{Singh_2024}
Shalini Singh, Boda Kiran, and S~Venkata~Mohan.
\newblock Carbon farming: a circular framework to augment co2 sinks and to
  combat climate change.
\newblock {\em Environmental Science: Advances}, 3, 02 2024.

\end{thebibliography}





\end{document}